\documentclass[11pt,oneside]{article}
\usepackage{authblk}
\usepackage{amsthm, amsmath, amssymb, bbm}
\usepackage{graphicx, wrapfig, caption, subcaption}
\graphicspath{ {./img/} }
\allowdisplaybreaks

\usepackage[dvipsnames]{xcolor}

\def\E{\mathbb{E}}
\def\R{\mathbb{R}}
\def\N{\mathbb{N}}
\def\Z{\mathbb{Z}}
\def\P{\mathbb{P}}
\newcommand*\dd{\mathop{}\!\mathrm{d}}

\newtheorem{theorem}{Theorem}[section]
\newtheorem{definition}{Definition}[section]

\newtheorem{remark}{Remark}[section]
\newtheorem{lemma}{Lemma}[section]

\title{Stability in stochastic hypergraph matching II: weights, batch arrivals, and continuous time}
\author{Doan Dai Nguyen, Ana Bu\v{s}i\'{c}}
\affil{Inria and DI ENS, École Normale Supérieure\\PSL University, Paris, France}

\begin{document}
	\maketitle
	\begin{abstract}
		Many real-life systems can be found as examples of stochastic matching on hypergraphs, such as production lines or assemble-to-order systems. Two common features are the number of items required may vary between matchings, and there may intermediary items which exist as a combination of other items and not of external arrivals. Both of these phenomena can be modelled by considering the weighted variant of stochastic matching.
		
		In this work, we formalise the notion of stochastic weighted matching on hypergraphs. We also allow batch arrivals, meaning multiple items of multiple classes may arrive at the same time, and in particular, the arrivals can be correlated between classes. Unlike the classical setting where items arrive at discrete time $t \in \N$, we allow arrival processes to take place in continuous time $t \in \R_{\geq 0}$.
		
		We then extend the results from the previous paper \cite{Nguyen2026} to overcome the intricacies brought up by this new setting. This allows us to derive necessary and sufficient criteria as direct generalisations of those in the unweighted setting, which depend only on the per-class arrival rates. As such, the correlation between classes bear no differences. The constructive proofs also give a maximally stable, periodic-review, size-based, arrival-rate agnostic policy.
	\end{abstract}
	
	\section{Introduction}

This article is the second in a series on necessary and sufficient criteria for the problem of stochastic matching on hypergraphs. To recall the usual setting, we are given a hypergraph $G = (V, E)$ where the vertices are the item classes and the hyperedges are the matching types. This is equipped with a distribution $\mu$ of support $V$ and a buffer of some initial state. At each time $t \in \mathbb{N}$, a vertex-$v$ is drawn i.i.d. according to $\mu$, and a class-$v$ item arrives, waiting in the buffer. A collection of items in the buffer can be matched if their corresponding classes form a hyperedge.

\subsection{Weighted matching}

It can occur, however, that multiple matchings are possible, and a matching policy is needed to determined which matchings to make, if any (even when a matching is possible, one is not obliged to realise it). The goal is to find a policy guaranteeing that no item waits indefinitely in the buffer, and the question of stability concerns determining necessary and sufficient criteria for the existence of such policy.

As we mentioned in the previous paper, this model captures many real-life applications such as assemble-to-order system \cite{Gurvich2014}, online games \cite{Veron2014}, and ride-hailing \cite{Teubner2015}, where multiple parties (for example, parts, players, rides) are required. However, in the classical setting, we assume that a matching $e$ has binary requirement. For a given vertex $v$, either a type-$e$ matching requires a class-$v$ item or it does not, which corresponds to whether $v$ is in $e$. In particular, this precludes the possibility of demanding multiple items of a given class, where the precise number of items required depending on the intended matching type. This phenomenon occurs in case of bundling in sales such as concert tickets \cite{Stremersch2002} or assembly-to-order systems.

Moreover, this also precludes the assembly-line-type system, where the small parts are combined to form intermediate bigger parts, before being eventually assembled into the final product. Given the ubiquity of such systems in industry, governing the production rate in order to ensure no bottleneck and/or waste is a crucial first step, before the question of optimal control such as storage cost (for example, production time) minimisation or profit maximisation may be systematically studied.

\subsection{Batch arrivals}

If we allow $G_{v, e}$ to take arbitrary values, it also means that a type-$e$ matching may generate more than one class-$v$ items, which then may be used for other matchings. In this light, it is thus natural to allow batch arrivals, meaning multiple items arrive at the same time.

This may seem contradictory at first, for if we ignore the probabilistic aspect of arrivals, allowing batch arrivals adds no generality, and moreover, we have single arrivals as a result of superimposing the independent Poisson processes following which the items arrive. However, we argue that superimposition also means that we are assume being able to make decisions regarding the matchings between each arrival. This is not necessarily the case for systems under heavy traffic, such as ticket venues for highly sought-after concerts, or ride-hailing during busy seasons.

\subsection{Continuous time}

It is also worth noting that in the classical setting, the arrivals and the matchings still take place at discrete time $t \in \N$, an assumption which many real-life systems do not fulfil.

A typical example is the pharmaceutical production \cite{Kashinath2025}. A production line can either take the ingredients (typically fluids) as incoming flows, which are processed in continuous manner, so-called Continuous Manufacturing. Or, traditionally, the ingredients (typically solid) are prepared as batches and passed through multiple steps with possible pauses in-between, which resembles what the stochastic hypergraph matching models. However, the choice of hybridisation is more popular \cite{Kashinath2025}, where parts of the production line are batch-based and other parts are continuous, which allows combining the benefits of both systems.

Other production lines may also be naturally described within this framework, even when they can be modelled as discrete systems \cite{Abdulmalek2006}. Once the volume of some (intermediary) products is sufficiently high, it is more beneficial to model them as a continuous flow rather than individual items.

Another scenario is when some raw materials are produced locally and whose input is regular and reliable, but other raw materials are imported from distant regions, which arrive in large batches and are subjected to transportation, tariffs, weather, and other factors which contribute to the variable delivery time. Moreover, a given raw material may itself be supplied through both channels, resulting in an input process that combines continuous flow with random batch arrivals \cite{Tkachenko2023}.

At some point in the process, a material intrinsically continuous might become discrete product \cite{Abdulmalek2006}, such as the process of packaging, so both types might also appear in the system as intermediary products.

The first problem posed by these systems is stability, meaning that we must ensure that no raw materials or intermediate products are stored indefinitely. Once this preliminary goal is obtained, one may wish to find optimal control, such as maximising profit or minimising production time. In many cases, both of these questions can be addressed by controlling the input, meaning deciding how much raw materials to order. However, this is \textit{not} always the case. An example is anaerobic co-digestion, where feedstocks are mixed to produce biogas. To maximise the yield, it is crucial that a correct mixing strategy be used \cite{Li2022}, but some of the input, such as sewage sludge, are not controllable.

These examples motivate a generalisation of stochastic matching model on hypergraphs in continuous time. In particular, the exact setting and and the question of stability without controlling the inputs constitute the topic of this work.

\subsection{Stochastic matching model on weighted hypergraphs}

To address these two shortcomings, we study a weighted model of stochastic matching on hypergraphs. The starting point is to consider the incidence matrix of the matching hypergraph, which we also denote by $G \in \{0, 1\}^{V \times E}$ without risk of misunderstanding. Each column of $G$ encodes a hyperedge $e$: one has $G_{v, e} = 1$ if and only if $v$ is in $e$, and $G_{v, e} = 0$ otherwise. In other words, one may interpret the entry $G_{v, e}$ has the number of class-$v$ items required per a type-$e$ matching.

This reinterpretation already allows $G_{v, e}$ to take arbitrary real non-negative values. We will, however, go one step further, and allow $G_{v, e}$ to take negative values as well, whilst understanding that if $G_{v, e} < 0$, then a type-$e$ matching \textit{generates} $|G_{v, e}|$ items of class $v$.

Next, to generalise the arrival process, we equip $G$ with a distribution $\mu$ on $V \times \mathbb{R}_{> 0}$ and a buffer of some initial state. Then, at time $t \in \mathbb{N}$, a pair $(v, k)$ is drawn i.i.d. according to $\mu$, and $k$ items of class $v$ arrive, waiting in the buffer. At any point, a type-$e$ matching may be realised (though, again, one is not obliged to do so) provided that the buffer contains at least $G_{v, e}$ class-$v$ items for all $v$ such that $G_{v, e} > 0$; if such a matching is realised, for each $v$ such that $G_{v, e} < 0$, there will be $|G_{v, e}|$ class-$v$ items newly generated and waiting in the buffer for other future matchings. The goal is now to find a policy guaranteeing no items, whether arrived from outside or created within the model, wait in the buffer indefinitely.

Much of these ideas can be used to generalise the arrival process to continuous time, which we will discuss at length later in this article.

\subsection{Literature review}

\subsubsection{Weighted matchings}

As we have remarked in the previous paper, before this series, little is known about stochastic matching on hypergraphs in general and stability criteria in particular. Amongst these results, that of Gupta is applicable to the non-negative integer weighted case $G_{v, e} \in \mathbb{N}$ \cite{Gupta2024}, and that of Nazari and Stolyar is applicable to general weight \cite[Remark 2]{Nazari2019}, neither of which concerns the question of stability.

In particular, Nazari and Stolyar studied the problem of reward maximisation and gave an asymptotically optimal policy with respect to a variant of long-term average reward. However, the stability of their policy is conditioned on the stability of the model $(G, \mu)$ itself \cite[Assumption 5]{Nazari2019}. On the other hand, Gupta also studied this problem for another generalisation of stochastic matching where each class might have different characteristics, but the analysis was done under General Position Condition, which is not always satisfied \cite[Section 6.4.1]{Comte2021}.

Even in case of graphs where the stability is well-characterised \cite{Comte2021, Busic2013, Mairesse2016}, this generalisation has not been properly studied. A special case of one-edge graph $e = (u, v)$ where $G_{u, e} = 1$ and $G_{v, e} \in \mathbb{N}^*$ was studied by Xiu, He, and Liu \cite{Xu1993}; recently, the condition $G_{u, e} = 1$ was relaxed to $G_{u, e} \in \mathbb{N}^*$ by Liu, Li, and Zhang \cite{Liu2024}. The authors of the latter remarked that to their best knowledge, their work was the first to address this case.

\subsubsection{Batch arrivals}

Regarding batch arrivals, amongst the works on hypergraphs, only that of Nazari and Stolyar allows multiple items to arrive at the same; more interestingly, they allow items of different classes to arrive at the same time as well. On the other hand, it is customary to consider only single-arrivals since items often arrive following Poisson processes. In case of bipartite graphs, as remarked Bu\v{s}i\'{c}, Gupta, and Mairesse \cite{Busic2013}, allowing imbalance batch arrivals in bipartite graph case leads to instability. However, it is worth noting that this remark does not necessarily apply to graph case, and eventually hypergraph case.

Note that our definition of batch arrivals does not require that all the items arriving at a given moment are of the same class, and indeed, we will allow multiple items from multiple classes to arrive. This is similar to the arrival process studied by Bu\v{s}i\'{c}, Gupta, and Mairesse \cite{Busic2013} and in contrast with the classical setting as defined by Mairesse and Moyal \cite{Mairesse2016}.

\subsubsection{Continuous time}

Regarding continuous-time arrival and matching, as far as we know, there is little literature on this topic. Indeed, stochastic matching models on graphs typically assume that items arrive one at a time, with i.i.d. types; in continuous time, this is often represented by independent Poisson arrival processes (see for instance the survey by Gardner and Righter on First-Come-First-Matched stochastic matching on bipartite graphs \cite{Gardner2020}).

Indeed, the foundational stability criteria were established under this assumption \cite{Busic2013, Mairesse2016, Comte2021}, on which further studies are based. There are other works on fluid limits \cite{Varma2020, Ozkan2020, Aveklouris2025, Aveklouris2024} and diffusion limits \cite{Xie2022, Ata2025, Gurvich2014}, these are still \textit{limits} and the underlying systems remain discrete-event systems, with individual items arriving through counting processes. To the best of our knowledge, however, stochastic matching on graphs has not been studied with primitive continuous material inputs.

A different line of work arises in the modelling of production systems with mixed continuous and discrete material flows, whereby one would use Timed Hybrid Petri Nets \cite{Julia2000, Ghaeli2005}. However, these models are generally used to analyse reachability, scheduling, conflict resolution, or performance properties of a given system, rather than to characterize policies ensuring stochastic stability.

\subsection{Contributions and organisation}
\label{subsection: contributions and organisation}

In this paper, we give some criteria characterising the stability for the two generalisations introduced above. The proof technique is similar to that in the previous paper, but with some caveat.

To start, we recall briefly the framework: we started with online assignment policies where the matching type of an item is fixed upon its arrival. This class of policies is simple enough to be analysed and characterised completely, thus leading to a stability criterion within this class \cite[Condition (1)]{Nguyen2026}. We then enlarged this class by adding reassignment to incorporate all polices, which then gives a second stability criterion with respect to all policies \cite[Condition (2)]{Nguyen2026}. We then introduced a generalisation of linear-algebraic criterion à la Comte et al. \cite[Condition (3)]{Nguyen2026}, and show that all three conditions are in fact equivalent.

However, in this new generalisation, when $G_{u, e} < 0$, each type-$e$ matching generates new class-$u$ items, and thus one cannot talk about assigning these items to $e$: they are not used by type-$e$ matchings, and if they are not used elsewhere, they will keep accumulating and make the model unstable.

Indeed, much of the proof remains similar, except that one of the key step toward the end, where we show that Condition (3) implies Condition (1) \cite[Lemma 5.2]{Nguyen2026} requires that $G_{u, e} > 0$ for all $u \in e$ and $e \in E$. Indeed, this approach suffices for non-negative-weight case, but if we wish to consider the general-weight case, which includes most importantly the assembly-line-types systems, more new ideas are needed.

Our contributions are as follow:
\begin{itemize}
	\item In Section \ref{section: preliminaries}, we introduce general-weight matching models on hypergraphs with batch arrivals in discrete time $t \in \N$. The presence of weights demands some changes, and in particular, we have a new notion of stability that generalises the one in the unweighted case.
	
	\item In Section \ref{section: online assignment policies}, Section \ref{section: reassigment and general policies}, and Section \ref{section: linear algebraic criterion and equivalence of criteria} we generalise the framework developed for the unweighted case to the case of non-negative weights.
	
	This allows us to derive three (equivalent) necessary and sufficient criteria for stability, namely Condition \eqref{eq:onlcond}, \eqref{eq:gencond}, and \eqref{eq:inccond}, which are analogous to those introduced in the previous paper \cite{Nguyen2026}. We also show that for some suitably constructed function $L_+$, the $L_+$-MaxWeight policy still has maximum stability region and is efficiently implementable.
	
	\item Then, to address the case of general weights, in Section \ref{section: generalised matchings}, we introduce a notion of generalised matching types which plays the role of matching types in the non-negative-weight case, and study some of its properties.
	
	\item However, unlike the matching types which are the (finitely many) hyperedges, there are infinitely many generalised matching types, and the framework for the case of non-negative weights, which uses the assumption that the matching hypergraph is finite, does not necessarily apply.
	
	To this end, in Section \ref{section: general weights}, we show how to reduce the stability to using only $|E|$ generalised matchings. Moreover, we show that this is optimal, and that all the results from the case of non-negative weights also apply.
	
	\item Finally, we show how to extend these results to matching models in continuous times. In Section \ref{section: continous-time matchings}, we formally define general-weight matching models on hypergraphs in continuous time $t \in \R$, which generalises the discrete-time case. Using the fact that the stability criteria only relate to the arrival process via the arrival rates, we show that they also apply to the continuous-time case. In particular, this gives us a family of maximally stable periodic-review policies.
\end{itemize}

Aside from the aforementioned, Section \ref{section: conclusion} concludes with some final remark.
	\section{Preliminaries}
\label{section: preliminaries}

\subsection{Matching models}

Much of the setting is similar to that in the previous paper, except that we now have to adapt state space description and matching policies to account for fractional matchings, meaning a matching where the number of items required from some class might not be an integer.

First, recall that we have a hypergraph $G = (V, E)$ on a finite set $V$ of classes, where by abuse of notation, we write $(u, e) \in G$ if $u \in e$ and $e \in E$. For each pair $(u, e)$, we assign a weight $e_u = G_{u, e}$ which is non-zero if and only if $(u, e) \in G$. 

We also equip $G$ with a distribution $\mu$ on $\R_{\geq 0}^V \setminus \{0\}$ and a buffer $\hat{Z} = (\hat{Z}_v)_{v \in V}$. For $v\in V$, et $K_v$ be the smallest sub-semigroup of $\R_{\geq 0}$ containing $\{|e_v|\mid e \in E\}$ and the support of the marginal distribution of $\mu$ on the $v$-coordinate. For convenience, let $K_V = \prod_{v \in V} K_v$, as well as
\[
	\mu_v = \int_{K_VV} k_v \dd\mu(k),
\]
and denote 
\[
	\|\mu\|_1 = \sum_{v \in V} \mu_v = \int_{K^V} \|k\|_1 \dd\mu(k),
\]
which we assume to be finite.

Before, we see $\hat{Z}_v(t)$ as the (finite) multiset of integers denoting the time of arrival of each item in the class. But since now a non-integer number of items might arrive at a time, and a matching might require a non-integer number of items from this class, this point of view is no longer appropriate. However, we can generalise this notion by seeing $\hat{Z}_v(t)$ instead as a sequence $(\hat{z}^t_{v, n})_{n \in \N} \in K^{V \times \N}$ where $\hat{z}^t_{v, n}$ is the ``number'' of items arrived at time $n$ and still in the buffer at time $t$.

Denote by $B(t) = (B_{v, n}(t))_{v \in V, n \in \N}$ the buffer state after the arrival of the batch, but before the matching policy makes decision, and by $\hat{Z}(t) = (\hat{Z}_{v, n}(t))_{v \in V, n \in \N}$ the buffer state after matchings have been made, if any.

This leads to a natural generalised definition of a matching.

\begin{definition}
	Given a hyperedge $e$, a type-$e$ matching $m$ is a sequence $(m_{v, n})_{v \in V, n \in \N}$ of numbers in $K$ such that for all $v$, we have $\sum_{n = 0}^\infty m_{v, n} = e_v$.
	
	Moreover, let $B = (b_{v, n}) \in K^{V \times \N}$ be a buffer state, then a type-$e$ matching $m$ is admissible if for all $v \in V$ and $n \in \N$, we have $m_{v, n} \leq b_{v, n}$. If we choose to realise this matching, then the resulting buffer state $C = (c_{v, n})$ is given by $c_{v, n} = b_{v, n} - m_{v, n}$.
\end{definition}

\begin{remark}
	The reason why we let $m_{v, n}$ be in $K_v$ stems from the classical setting where $K_v = \N$ and we are required to take items as a whole. Indeed, in the example of kidney exchange, we cannot give two half-kidneys to a patient.
	
	However, if $K_v = \R_{\geq 0}$, it means the items themselves are divisible, and one can talk about taking a fraction of an item. Indeed, if we now require that the ``number'' of items used for a matching to be integer, it is most likely that the model is never stable.
	
	Thus in general, we expect that the number of items used for a matching belongs to the same domain as the number of items arriving.
\end{remark}

A realisation of the model is as follow: we start with some initial buffer $\hat{Z}(0)$. At time $t$, let $A_t$ be a random variable drawn i.i.d. according to $\mu$.

Suppose the realisation of $A_t$ yields a tuple $k = (k_v)_{v \in V}$, then for each $v \in V$, $k_v$ class-$v$ items arrive, waiting in the buffer. Before any matchings are made, the buffer state is now $B(t) = (b_{v, n})$, where for all $v \in V$ and $n \in \N$, 
\[
	B_{v, n}(t) = \begin{cases}
		\hat{Z}_{v, n}(t-1) + k_v & \text{ if } n = t\\
		\hat{z}_{v, n}(t-1) & \text{ otherwise}
	\end{cases}.
\]
A matching policy $\Phi$ is then a collection of functions $(\Phi_t)_{t \in \N}$ where $\Phi_t$ determines the matchings at time $t$. Then $\hat{Z}(t)$ denotes the new buffer state.

\begin{remark}
	In this context, $\mu_v$ now is is the average number of arriving class-$v$ items, which may be assumed to be finite. This is a mild assumption and is justified from practical point of view. For some technicalities later on, we will also require that the second moment of the number of arriving items is finite, meaning
	\[
		\|\mu\|_2^2 = \int_K \|k\|^2 \dd \mu(k) < \infty.
	\]
	Both these conditions are satisfied in the classical setting, where $K = \{0, 1\}$.
\end{remark}

\begin{remark}
	Note that this means in particular that we allow the arrivals to be correlated between classes, but the arriving batch themselves still forms a Markov chain: the batch of arriving items drawn at time $t$ is independent from and identically distributed as that at time $t'$ for all $t' \neq t$.
\end{remark}

\subsection{Matching policies}

The rest of the definitions follow the previous paper. In particular, we require that $\Phi_t$ depends only on $\mathcal{H}_t$, the history up to time $t$, which is the collection of the arrivals $(A_i)_{i \leq t}$ up to time $t$ and the decisions made prior to that, given by $(\Phi_i)_{i \leq t-1}$.

This is a general description of a matching policy, but we can have relaxations such as Markovian, stationary, and size-based policies as in the classical setting.

\begin{definition}
	\hfill
	\begin{itemize}
		\item Given the buffer state $\hat{Z}(t) = (\hat{z}^t_{v, n})_{v \in V, n \in \N}$ at time $t$, let $Z(t) = (Z_v(t))_{v \in V}$ be given by
		\[
			Z_v(t) = \sum_{n = 0}^{\infty} \hat{Z}_{v, n}(t)
		\]
		then a matching policy $\Phi$ is said to be size-based if $\Phi_t$ depends only on $(Z(i))_{i \leq t-1}$ and not $(\hat{Z}(i))_{i \leq t-1}$.
		\item A matching policy $\Phi$ is said to be Markovian if $\Phi_t$ depends only on $t$, $\hat{Z}_{t-1}$ and $A_t$. Moreover, such policy is said to be stationary if $\Phi_t$ does not depend on $t$.
	\end{itemize}
\end{definition}

Many of the popular policies, such as Match-the-Longest and MaxWeight are size-based and Markovian, as they do not consider the exact time of arrival and treat all items within the same class as equal. However, there are also non-Markovian policies, such as First-Come-First-Match, where the order of arrival is considered.

\subsection{Stability}

Before, we \cite{Nguyen2026} say that $\Phi$ stabilises $(G, \mu)$ if at any given point $T \geq 0$, the time it takes to clear the buffer is finite in expectation, meaning
\[
	\mathbb{E}\left[\min\{t \geq 0 \mid \forall v, Z_v(t + T) = 0\} \mid \mathcal{H}_T\right] < \infty.
\]

However, when we allow partial matching, it can happen that the system is \textit{never} cleared. 

For example, consider a rather extreme case where the size of arriving batches takes only rational values, meaning $\mu$'s support is included in $V \times \mathbb{Q}_{> 0}$, but the number of items required from a given class for a matching is always irrational, meaning $e_u \not\in \mathbb{Q}$ for all $(u, e) \in G$. This motivates the need for a generalised notion of stability.

Even when all weights and batch sizes are integer, this can still happen: take for instance a closed network with some initial items and the matchings preserve the total number of items, but no new items arrive. Then the number of items in the buffer remains constant and strictly positive, so it is never cleared.

It turns out that we do not need the system to be cleared, but only to stabilise, meaning that the content of the buffer eventually gets ``small''. Mathematically speaking, we require that there be a compact set $C \in K^V$ such that for all $T \geq 0$, we have
\[
	\mathbb{E}\left[\min\{t \geq 0 \mid Z(t + T) \in C\} \mid \mathcal{H}_T\right] < \infty.
\]

Readers who are familiar with the theory of general state-space Markov chains and processes might recognise the similarity with the notion of positive Harris recurrence. Indeed, when if the policy is Markovian, the induced chain $Z$ is a Markov chain. Markov property gives
\[
	\mathbb{E}\left[\min\{t \geq 0 \mid Z(t + T) \in C\} \mid \mathcal{H}_T\right] = \E_{Z(T)}[\tau_C].
\]
Under the standard irreducibility assumptions, if $C$ is a petite set and the corresponding hitting-time condition satisfies the hypotheses of Meyn and Tweedie's recurrence theorems, then one obtains positive Harris recurrence of Z (see, for example, Meyn and Tweedie's \textit{Markov Chains and Stochastic Stability} \cite[Chapter 11]{Meyn2009}).

This is not a coincidence: in the classical setting, stability for Markovian policies is equivalent to the induced Markov chain being positive recurrent. We only replace ``positive recurrent'' with ``positive Harris recurrent'' as it is considered to be the correct generalisation. However, in this article, even when the policy is Markovian, we do \textit{not} require that the induced chain be positive Harris recurrent in order for the system to be stable.

	\section{Online assignment policies}
\label{section: online assignment policies}

In this Section, we assume that $e_v > 0$ for all $(v, e) \in G$.

Most of the ideas are similar to the classical setting \cite[Section 3]{Nguyen2026}, but with some caveats related to the incorporation of weights, the fact that there are infinitely many stabilising matching types, as well as the precise definition of online assignment policies.

\subsection{Online assignment policies}

We start with formally defining the notion of assignment.

\begin{definition}
	By assigning some class-$v$ items to a matching type $e \ni v$, we mean that these items then can only be matched by a type-$e$ matching. In particular, even if there are other available matching types using class $v$, these items will still not be matched.
\end{definition}

Note that by ``some'' items, we mean that the number of items can be non-integer. Next, we define the notion of assignment rates.

\begin{definition}
	Given an arrival rate $\mu$, we call a tuple $\nu = (\nu_{v, e})_{(v, e) \in G} \in \R_{\geq 0}^G$ an assignment rate if for all $v \in V$, we have $\sum_{e \ni v} \nu_{v, e} = \mu_v$.
\end{definition}

An online assignment policy $\Phi$ is given as follow: at time $t$, an assignment rate $\nu$ is chosen. Then, a tuple $A_t = (k_v)$ is drawn i.i.d. from $\mu$, and for each $v\in V$, $k_v$ class-$v$ items arrive. Then, for each $v$, we then assign \textit{all} $k_v$ items to a matching type $e$ with probability $\frac{\nu_{v, e}}{\mu_v}$; the definition of assignment rate ensures that this is a proper probability distribution. In case $\mu_v = 0$, we have a divide-by-0 situation, but this is not a danger since by definition, class-$u$ items (almost surely) never arrives, so the policy can have its behaviour undefined.

Once assigned, the items within the buffer for the matching type $e$ form matchings together in first-come-first-matched fashion. Once all the possible matchings are formed, we denote by $X_{v, e}(t)$ to be the number of class-$v$ items assigned to type-$e$ matching which remain in the buffer. This forms a process $X$ where $X(t)$ is an element of $\R_{\geq 0}^G$. Note that this means that our definition of online assignment policies requires it to be size-based.

Similar to the stability with respect to general policies, we say that an online assignment policy stabilises the model $(G, \mu)$ if there exists a compact set $C \subset \R_{\geq 0}^G$ such that for all $T > 0$, we have
\[
	\E[\min\{t \geq 0 \mid X(T + t) \in C\} \mid \mathcal{H}_T] < \infty, 
\]
meaning that at any given point $T$, the expected time before the enlarged buffer returns to the set $C$ is finite.

Some remarks are in order, notably regarding the definition of online assignment policies and of batch arrivals.

\begin{remark}
	In case $\{1\} \subsetneq K \subseteq \N$, meaning that the items may arrive in batches but are indivisible, one may ask why we choose to define online assignment policies to be assigning the whole batch to a matching type instead of assigning individual items.
	
	The first reason stems from the practical point of view. In applications, there are cases when the incoming traffic exceeds the capacity of the system and overwhelms the decision maker. If the traffic is due to the number of classes, the problem lies purely in the limited computational capacity. However, if the hypergraph is still relatively small, the traffic is due to the numerous arrivals at some classes. In this case, we may speed up the matching if we allow assignment in batches.
	
	The second reason stems from the technical point of view. This definition simplifies and generalises to the case where the items are infinitely divisible, for instance when $K = \R$. One can certainly still assignment parts of the arriving item batch to (finitely many) different matching types, but as we will see, our rather restrictive definition in fact incurs no cost of generality.
\end{remark}

\subsection{Necessary condition for stability}

We first revisit the informal argument that leads to our first stability criterion for unweighted case \cite[Condition (1)]{Nguyen2026}. Consider the support $U = \{(v, e) \mid X_{v, e}(t) > 0\}$ of the buffer state $X(t)$ at time $t$, there are essentially two forces at play that opposes each other:
\begin{itemize}
	\item for each pair $(v, e) \in U$, an arriving class-$v$ item which is assigned to $e$ increases the number of items by $1$; 
	\item but at the same time, each arriving item which is assigned to $e$ also brings $\frac{1}{|e|}$ matching, hence decreases the number of items by $\frac{1}{|e|}$.
\end{itemize}
The stability criterion then roughly says the the latter outweighs the former. This informal intuition still holds for weighted cases.

Consider the buffer state $X(t)$ at time $t$. Within one stabilising matching type $e \in E$, for each $v \in e$, there are $X_{v, e}(t)$ items. All the items form type-$e$ matchings in first-come-first-matched fashion; there are exactly $\min_{v \in e} X_{v, e}(t)$ such matching formed. As an online assignment policy makes all of them, we can assume that 
\[
	\min_{v \in e} X_{v, e}(t) - e_v < 0,
\]
meaning that the content of the buffer is not sufficient to make a type-$e$ matching.

Then we consider the set $U = \{(v, e) \mid X_{v, e} \geq e_v \} \subset G$ the support of the buffer.

\begin{definition}
	\label{definition: support}
	A set $U \subset G$ is called a support if for all $e \in E$, the $e$-slice of $U$, given by $U_e = \{v \mid (v, e) \in U\}$, cannot be $e$, or in other words, there exists some vertex $v$ such that $(v, e) \in G \setminus U$.
\end{definition}

From the definition, a support of any buffer state is a support in the above sense, and conversely, to every support, we can define a buffer state admitting such set as its support in the usual sense.

Now suppose that $(G, \mu)$ is stabilised by some online assignment policy $\Phi$, which chooses at time $t$ an assignment rate $\nu$. On average, there will be $\nu_{v, e}$ class-$v$ items arriving and being assigned to $e$, which will require $\frac{\nu_{v, e}}{e_v}$ type-$e$ matchings. On the other hand, on average, there will be $\nu_e = \sum_{v \in e} \nu_{v, e}$ arriving items being assigned to $e$. A type-$e$ matching requires $|e| = \sum_{u \in e} e_u$ items, thus these arriving items bring $\frac{\nu_e}{|e|}$ matchings.

Thus, heuristically, the number of required matchings after the items arrived and assigned increases by
\[
	\frac{\nu_{v, e}}{e_v} - \frac{\nu_e}{|e|},
\]
Consider a given matching type $e$ and a class $v \in e$, if we have in abundance class-$v$ items assigned to $e$, meaning $X_{v, e}(t) \geq e_v$ then we may expect the quantity above to be negative.

This leads us to the following condition.
\begin{equation}
	\label{eq:onlcond}
	\textrm{\parbox{.8\textwidth}{For all supports $U$, there exists an assignment rate $\nu^U$ such that for all $(v, e) \in U$,
	\[
		\frac{\nu^U_{v, e}}{e_v} < \frac{\nu^U_e}{|e|}.
	\]
	}}
\end{equation}
Notice that this is more elaborate than \texttt{NCOND} \cite{Mairesse2016} and our stability criterion in unweighted case \cite[Condition (1)]{Nguyen2026}. Nevertheless, we will see that it is still a necessary condition.

\begin{lemma}
	\label{lemma: necessity of onlcond}
	If $(G, \mu)$ is stabilisable by an online assignment policy, then Condition \eqref{eq:onlcond} is satisfied.
\end{lemma}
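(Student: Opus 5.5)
We argue by contraposition. Suppose Condition \eqref{eq:onlcond} fails, so there is a support $U$ such that for every assignment rate $\nu$ some pair $(v,e)\in U$ satisfies $\frac{\nu_{v,e}}{e_v}\geq \frac{\nu_e}{|e|}$. We will show that no online assignment policy $\Phi$ stabilises $(G,\mu)$: from a suitably chosen history, the expected return time of $X$ to any fixed compact set $C$ is infinite.

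\textbf{Step 1: a potential function.} The natural Lyapunov-type quantity attached to $U$ is the ``excess'' of the $U$-coordinates over what can be matched. For each $e$, a type-$e$ matching removes $e_v$ items from every $(v,e)$. So the normalised quantity $X_{v,e}/e_v - \min_{w\in e} X_{w,e}/e_w$ is invariant under matchings. Since $U_e\neq e$, some $w$ has $(w,e)\notin U$.

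\textbf{Step 2: from the failure of the condition to a supporting direction.} The set of assignment rates is a compact polytope, and the maps $\nu\mapsto \frac{\nu_e}{|e|}-\frac{\nu_{v,e}}{e_v}$ are linear. The failure of \eqref{eq:onlcond} says that no $\nu$ makes all of them strictly positive on $U$. By a theorem of the alternative (Gordan/Farkas, or a minimax argument over the simplex on $U$), there are weights $\lambda_{v,e}\geq 0$ on $U$, not all zero, such that
\[
F(\nu)=\sum_{(v,e)\in U}\lambda_{v,e}\Bigl(\frac{\nu_{v,e}}{e_v}-\frac{\nu_e}{|e|}\Bigr)\geq 0
\]
for every assignment rate $\nu$. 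This is the step I expect to be the main obstacle. The difficulty is turning $F$ into a potential $\Psi(X)$ whose conditional drift is bounded below by $F(\nu_t)\ge 0$ for whatever $\nu_t$ the policy picks, including history-dependent choices. The $\nu_e/|e|$ term corresponds to a heuristic fluid matching rate, not to an exact matching count. I would therefore first try the candidate
\[
\Psi(X)=\sum_{(v,e)\in U}\lambda_{v,e}\Bigl(\frac{X_{v,e}}{e_v}-\frac{1}{|e|}\sum_{w\in e}X_{w,e}\Bigr),
\]
which is exactly invariant under type-$e$ matchings, because each matching removes $e_w$ from each coordinate and $\sum_w e_w=|e|$. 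Its increment per arrival has conditional expectation exactly $F(\nu_t)\ge 0$. So $\Psi(X(t))$ is a submartingale with increments of finite second moment, using $\|\mu\|_2<\infty$.

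\textbf{Step 3: non-recurrence.} Start from an initial state (or reach one with positive probability) in which $X$ has support $U$ and $\Psi(X(T))$ is larger than $\sup_C \Psi$. The set $C$ is compact and $\Psi$ is continuous, so this supremum is finite. Optional stopping on the submartingale, applied to the hitting time $\tau_C$, shows $\E[\tau_C\mid\mathcal H_T]=\infty$: if $\E\tau_C<\infty$, Wald-type bounds apply, since the increments are $L^2$-bounded. We would then get $\E\Psi(X(T+\tau_C))\ge \Psi(X(T))>\sup_C\Psi$, which contradicts $X(T+\tau_C)\in C$. One subtlety to handle is that $\Psi$ can be negative or unbounded below on some states, but this does not affect the argument, which only needs $\Psi$ on $C$ to be bounded above. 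A second subtlety is ensuring such a starting state is reachable for any $\Phi$. I would handle this via the freedom in the initial buffer state, which the definition quantifies over through $\mathcal{H}_T$ with $T\ge 0$.
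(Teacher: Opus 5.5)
Your proposal is correct and follows the paper's proof. Your $\lambda$ is the paper's separating normal $w\geq 0$, and your $\Psi(X)$ is exactly $L_-(Y(t))$ with $Y_{v,e}=X_{v,e}/e_v-\frac{1}{|e|}\sum_{u\in e}X_{u,e}$, a matching-invariant submartingale that cannot return to $C$ in finite expected time from a large starting value. You are more explicit than the paper about the optional-stopping step at $\tau_C$ (which needs $\E\tau_C<\infty$ and conditionally integrable increments), and both arguments rely in the same way on choosing the initial buffer so that the potential is large.
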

\begin{proof}
	We need to modify the construction of the process $Y$ in the proof of necessity for Condition (1) in the unweighted case \cite[Lemma 3.1]{Nguyen2026} in order to account for batch arrival and weighted matching. Moreover, as the buffer might not be cleared and the condition is slightly different, the proof must also be adapted, but the central idea remains the same.
	
	First, we fix an online assignment policy $\Phi$ stabilising $(G, \mu)$. Let $C$ be a compact set for which the stability condition is satisfied by definition.
	
	Now we start a realisation of the system. Let it assign the initial buffer arbitrarily, and let $Y_{v, e} = X_{v, e}(0)$ for all $(v, e) \in G$. Suppose for each $v\in V$, $k_v$ class-$v$ items arrive and the policy decides to assign it to a matching type $e$. We then define
	\[
		Y_{v, e}(t + 1) = Y_{v, e}(t) + \frac{k_v}{e_u} - \frac{\sum_{u \in e, k_u\text{ class-$u$ items are assigned to } e} k_u}{|e|}.
	\]
	By induction, we have that
	\[
		Y_{v, e}(t) = \frac{1}{e_v} X_{v, e}(t) - \frac{1}{|e|} \sum_{e \ni u} X_{u, e}(t).
	\]
	
	Now assume for the sake of contradiction that Condition \eqref{eq:onlcond} is not satisfied, meaning that there exists a support $U \subset G$ such that for all assignment rate $\nu$, there exists either some $(v, e) \in U$ such that 
	\[
		\frac{\nu_{v, e}}{e_v} \geq \frac{\nu_e}{|e|}.
	\]
	
	Given an assignment rate $\nu$, we define a map $\Delta : \R^G_{\geq 0} \to \R^U$ by
	\[
		(\Delta \nu)_{v, e} = \frac{\nu_{v, e}}{e_v} - \frac{\nu_e}{|e|}
	\]
	which is a linear map on $\nu$. The set of assignment rates $\nu$ is a convex polytope $\mathcal{P}$ defined by the following constraints:
	\begin{itemize}
		\item for all $(v, e) \in G$, $\nu_{v, e} \geq 0$, and
		\item for all $v \in V$, $\sum_{e \ni v} \nu_{v, e} = \mu_v$.
	\end{itemize}
	
	Since $\Delta$ is a linear map, the image $\Delta(\mathcal{P})$ is also a convex polytope in $\R^U$. As \eqref{eq:onlcond} is not satisfied, every point $x \in \Delta(\mathcal{P})$ has some non-negative coordinate.
	
	In other words, $\Delta(\mathcal{P})$ and $\R^U_{\leq 0}$ have disjoint interior. As both sets are compact and convex, hyperplane separation theorem implies that there exists a hyperplane $H$ separating the two sets. Up to a translation, $H$ can be made to be tangent to $\R^U_{\leq 0}$, but as it is a cone, $H$ will pass by the origin.
	
	Let $w$ be a normal vector which is non-zero since $H$ is non-degenerate; we normalise it to have $\|w\|_1 = 1$. As $H$ separates $\Delta(\mathcal{P})$ and $\R^U_{\leq 0}$, we can even choose $w$ such that
	\begin{itemize}
		\item for all $x \in \Delta(\mathcal{P})$, we have $\langle x, w \rangle \geq 0$, and
		\item for all $x \in \R^U_{\leq 0}$, we have $\langle x, w \rangle \leq 0$.
	\end{itemize}
	
	The last condition is equivalent to saying that for all $x \in \R^U_{\leq 0}$, we have $\langle x, w \rangle \leq 0$: indeed, it suffices to replace $x$ by $x + \varepsilon \mathbbm{1}$. By replacing $x$ with $-x$, this is also equivalent to saying that for all $x \in \R^U_{\geq 0}$, we have $\langle x, w \rangle \geq 0$, meaning that $w$ lies in the dual cone of $\R^U_{\geq 0}$. But since this is a self-dual cone, we have $w \in \R^U_{\geq 0}$, or $w$ has only non-negative coordinates.
	
	Now we define a function $L_-$: for a given state $y \in \R_{\geq 0}^G$, we have 
	\[
		L_- (y) = \sum_{(v, e) \in U} w_{v, e} y_{v, e},
	\].
	
	By construction of $Y$, for $(v, e) \in G$, if $(v, e) \in U$ and $k_v$ class-$v$ items arrive and are assigned to the hyperedge $e$, its contribution to the difference $L_- (Y(t+1)) - L_- (Y(t))$ is given by 
	\[
		k_v\left[w_{v, e}\left(\frac{1}{e_u} - \frac{1}{|e|}\right) - \sum_{(u, e) \in U, u \neq v} \frac{w_{u, e}}{|e|}\right] = k_v\left[w_{v, e} - \frac{\sum_{(u, e) \in U} w_{u, e}}{|e|}\right],
	\]
	and $0$ otherwise. Therefore, conditioning on $\hat{X}(t)$ and the behaviour of $\Phi$ at time $t$ denoted by an assignment rate $\nu$, by the first property of the separating plane $H$, we have
	\begin{align*}
		& \E\left[L_-(Y(t+1)) - L_-(Y(t))\right] \\
		= & \sum_{(v, e) \in U} \nu_{v, e} \left(w_{v, e} - \frac{\sum_{(u, e) \in U} w_{u, e}}{|e|}\right) \\
		= & \sum_{(v, e) \in U} \nu_{v, e} w_{v, e} - \sum_{(v, e) \in U} \nu_{v, e} \frac{\sum_{(u, e) \in U} w_{u, e}}{|e|} \\
		= & \sum_{(v, e) \in U} \nu_{v, e} w_{v, e} - \sum_{(v, e) \in U} w_{v, e} \frac{\sum_{(u, e) \in U} \nu_{u, e}}{|e|} \\
		= & \sum_{(v, e) \in U} w_{v, e} \left(\nu_{v, e} - \frac{\sum_{(u, e) \in U} \nu_{u, e}}{|e|} \right) \\
		= & \langle w, \Delta \nu \rangle \geq 0,
	\end{align*}
	which can also be written as $\E\left[L_-(Y(t)) - L_-(Y(t+1))\right] \geq 0$. By induction, we obtain $L_-(Y(0)) \leq \E\left[L_-(Y(t))\right]$.
	
	Denote $e_G = \min_{(v, e) \in G} e_v > 0$, we have
	\begin{align*}
		e_G \|Y(t)\| 
		& \leq \sum_{(v, e) \in G} e_v |Y_{v, e}(t)| = \sum_{(v, e) \in G} \left| X_{v, e}(t) - \frac{e_v}{|e|} \sum_{e \ni u} X_{u, e}(t) \right| \\
		& \leq \sum_{(v, e) \in G} \left(|X_{v, e}(t)| + \frac{e_v}{|e|} \sum_{e \ni u} |X_{u, e}(t)|\right) = 2\|X(t)\|_1.
	\end{align*}
	and in particular, for
	\[
		\tau = \min\{t \geq 0 \mid X(t) \in C\} < \infty, 
	\]
	we have $\|Y(\tau)\| \leq \frac{2}{e_G} \|X(\tau)\|_1 \leq \frac{2}{e_G}\max_{x \in C} \|x\|_1$. Combining with the bound above for $t = \tau$, we have $L_-(Y(0)) \leq \frac{2}{e_G}\max_{x \in C} \|x\|_1$.
	
	However, this last inequality must also hold for all initial state $Y(0)$, and whilst the right-hand side is constant, we can choose $Y(0)$ to make $L_-(Y(0))$ arbitrarily large regardless of how the policy assigns the initial buffer. This gives a contradiction as desired.
\end{proof}

\subsection{Sufficiency of Condition \eqref{eq:onlcond}}

As we may expected from the unweighted case, Condition \eqref{eq:onlcond} is also sufficient for stability within the class of online assignment policies. Much of the proof from the unweighted case carries over; this is because in fact we have not really used the fact that only one item arrives at a time, and we only work with the average number of arriving items. However, there is one step in the proof where we will need the fact that $\mu_v^2$ is finite for all $v \in V$.

To outline the proof, we introduce a Lyapunov function $L_+$, which is the weighted version of the function we introduced in the previous paper \cite[Section 3.3]{Nguyen2026}, as well as a deterministic policy of type MaxWeight which aims at minimising $L_+$ myopically.

Assume that $(G, \mu)$ is stabilisable, we know that \eqref{eq:onlcond} is satisfied. Using this condition, we construct a randomised policy depending on $\mu$ which achieves negative drift with respect to $L_+$ outside a finite region; this finite region then plays the role of the compact set in the stability definition. Since $L_+$-MaxWeight minimises the drift myopically, it too stabilises $(G, \mu)$.

In the unweighted case, the function we used was
\[
	\frac{1}{2} \sum_{e \in E} \sum_{\{u, v\} \in e^2} \left(x_{u, e} - x_{v, e}\right)^2.
\]
In particular, the term $\left(x_{u, e} - x_{v, e}\right)^2$ measures the pairwise imbalance. Ideally, we would have the same number of class-$u$ items as class-$v$ items, all of which then form perfect matchings; the sum $\sum_{{u, v} \in e^2} \left(x_{u, e} - x_{v, e}\right)^2$ then measures how far away we are from this ideal state.

The idea for weighted case is similar, except that now the number of items required from each class for a matching might not be the same: if a matching type $e$ requires $e_u$ class-$u$ items and $e_v$ class-$v$ items, ideally we would like $\frac{x_{u, e}}{e_u} = \frac{x_{v, e}}{e_v}$. The difference between these two quantities is the number of missing matchings. Naturally, this motivates the following Lyapunov function.
\[
	\frac{1}{2} \sum_{e \in E} \sum_{\{u, v\} \in e^2} \left(\frac{x_{u, e}}{e_u} - \frac{x_{v, e}}{e_v}\right)^2.
\]

Due to some technicalities in the proof, we will instead use the following function
\[
	L_+(x) = \frac{1}{2} \sum_{e \in E} \frac{1}{|e|}\sum_{\{u, v\} \in e^2} e_u e_v \left(\frac{x_{u, e}}{e_u} - \frac{x_{v, e}}{e_v}\right)^2.
\]
And indeed, the rest of the analysis carries over.

\begin{lemma}
	\label{lemma: sufficiency of onlcond}
	If Condition \eqref{eq:onlcond} is satisfied, $(G, \mu)$ is stabilisable by a randomised Markovian size-based online assignment policy and by (the online assignment version of) $L_+$-MaxWeight.
\end{lemma}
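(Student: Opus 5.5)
We prove the statement by a Lyapunov drift argument with the function $L_+$, first for a randomised policy and then transferring the drift bound to $L_+$-MaxWeight. Rescale the state as $y_{v,e} = x_{v,e}/e_v$, the number of type-$e$ matchings that class-$v$ items could supply. Then
\[
	L_+(x) = \frac{1}{2}\sum_{e \in E}\frac{1}{|e|}\sum_{\{u,v\}\in e^2} e_u e_v (y_{u,e}-y_{v,e})^2 .
\]
This is a weighted variance of $(y_{v,e})_{v\in e}$ with weights $e_v/|e|$, multiplied by $|e|$. Write $\bar y_e = \sum_{u \in e} e_u y_{u,e}/|e|$ for the corresponding weighted mean. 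Realising the $\min_v y_{v,e}$ available matchings subtracts the same constant from every $y_{v,e}$, $v\in e$, so it leaves $L_+$ unchanged. We may therefore compute the drift before matchings are realised, and work with states where $\min_{v\in e} y_{v,e} < 1$ for every $e$.

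\textbf{Drift of one arrival.} Suppose $k_v$ class-$v$ items are assigned to $e$. Then $y_{v,e}$ increases by $k_v/e_v$, and the first-order change of $L_+$ is
\[
	\frac{k_v}{e_v}\,\partial_{y_{v,e}} L_+ = k_v\,(y_{v,e}-\bar y_e).
\]
This is exactly why the factors $e_ue_v/|e|$ were chosen: the gradient is the deviation from the weighted mean. The second-order term is bounded by a constant times $k_v^2$. Its expectation is therefore bounded by a constant times $\|\mu\|_2^2$, which is where the finite second moment is used. Averaging with assignment rate $\nu$, and using $\sum_{v\in e} e_v(y_{v,e}-\bar y_e)=0$, we get
\[
	\E[\Delta L_+ \mid x] \le \sum_{e}\sum_{v \in e}\Big(\frac{\nu_{v,e}}{e_v}-\frac{\nu_e}{|e|}\Big)e_v\,(y_{v,e}-\bar y_e) + c\,\|\mu\|_2^2 .
\]

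\textbf{Randomised policy.} When the state is $x$, take $U$ to be its support in the sense of Definition~\ref{definition: support} and use the rate $\nu^U$ supplied by Condition~\eqref{eq:onlcond}. Split the drift sum into pairs in $U$ and pairs not in $U$.
\begin{itemize}
	\item For $(v,e)\notin U$ we have $y_{v,e} < 1$, so $y_{v,e}-\bar y_e \le 1$. These terms contribute at most a constant.
	\item For $(v,e)\in U$ the coefficient $\frac{\nu^U_{v,e}}{e_v}-\frac{\nu^U_e}{|e|}$ is at most $-\delta<0$. Here $\delta$ is a minimum over the finitely many supports $U$.
\end{itemize}
This leaves the main obstacle, the sign of $y_{v,e}-\bar y_e$ on $U$. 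Since some $u\in e$ has $y_{u,e}<1$, large entries lie above the mean, but moderately large entries may lie below it. I would handle this as in the unweighted paper, by ordering the values $y_{v,e}$ within each hyperedge. Applying Condition~\eqref{eq:onlcond} to the threshold support $U_\theta=\{(v,e): y_{v,e}\ge\theta\}$ for a suitable $\theta$, one shows that
\[
	\sum_{v\in e} c_{v,e}\,(y_{v,e}-\bar y_e) \le -\delta' \max_{v\in e}\big(y_{v,e}-\min_{u\in e} y_{u,e}\big) + C .
\]
Since $\max_v y_{v,e}-\min_u y_{u,e}$ is comparable to $\sqrt{L_+}$ restricted to $e$, this yields a drift of at most $-\varepsilon\sqrt{L_+(x)}+C'$, which is negative outside a bounded set.

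\textbf{Conclusion.} The set $\{L_+\le R\}$, together with the condition $\min_v y_{v,e} < 1$, is compact. A Foster-type argument (optional stopping on $L_+$ plus the elapsed time) gives a finite expected return time to it from any history. $L_+$-MaxWeight minimises the first-order drift term myopically over all assignment rates. Its drift is therefore at most that of the randomised policy, up to the same second-moment constant, so the same conclusion holds for it.
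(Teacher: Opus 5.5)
\textbf{There is a genuine gap in the central drift estimate.} Your overall structure matches the paper's: the Lyapunov function $L_+$, a randomised policy playing $\nu^U$ for the current support $U$, a Foster-type conclusion, and the transfer to $L_+$-MaxWeight. However, the key bound is not established, and part of your argument for it is wrong.

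You keep the weighted mean $\bar y_e$ and bound $\sum_{v\in e}\big(\frac{\nu_{v,e}}{e_v}-\frac{\nu_e}{|e|}\big)e_v(y_{v,e}-\bar y_e)$ term by term. This creates two problems.
\begin{itemize}
\item On $U$, the sign of $y_{v,e}-\bar y_e$ is uncontrolled. You dispose of this only with an unproved ``one shows that'' via threshold supports $U_\theta$. Moreover, applying Condition~\eqref{eq:onlcond} to $U_\theta$ yields a rate $\nu^{U_\theta}$, not the rate $\nu^U$ your policy actually plays.
\item Off $U$, your claim that the terms contribute at most a constant is false. The bound $y_{v,e}-\bar y_e\le 1$ controls a term only when its coefficient is non-negative, and Condition~\eqref{eq:onlcond} says nothing about that sign off $U$. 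For example, let $e=\{a,b,c\}$ have unit weights, with $y_{a,e}=N\ge 1$ and $y_{b,e}=y_{c,e}=0$, so $U_e=\{a\}$. Take $\nu_{a,e}=\nu_{c,e}=0<\nu_{b,e}$, which Condition~\eqref{eq:onlcond} permits. Then the $c$-term equals $\frac{\nu_{b,e}}{3}\cdot\frac{N}{3}$, which is unbounded in $N$.
\end{itemize}

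\textbf{The missing observation.} Note that $\sum_{v\in e}e_v\big(\frac{\nu_{v,e}}{e_v}-\frac{\nu_e}{|e|}\big)=\nu_e-\nu_e=0$. Hence the $\bar y_e$ part vanishes identically, and the first-order drift is exactly
\[
\sum_{(v,e)\in G}\nu_{v,e}\Big(\frac{x_{v,e}}{e_v}-\frac{x_e}{|e|}\Big)=\sum_{(v,e)\in G}x_{v,e}\Big(\frac{\nu_{v,e}}{e_v}-\frac{\nu_e}{|e|}\Big),\qquad x_e=\sum_{u\in e}x_{u,e}.
\]
This is the rewriting the paper uses. Now the weights $x_{v,e}$ are non-negative, so no sign issue arises.
\begin{itemize}
\item On $U$, each coefficient is at most $-\varepsilon$, uniformly over the finitely many supports.
\item Off $U$, one has $x_{v,e}<e_v$ and $\big|\frac{\nu_{v,e}}{e_v}-\frac{\nu_e}{|e|}\big|\le\frac{\nu_{v,e}}{e_v}+\frac{\nu_e}{|e|}$, so these terms contribute at most $2\|\mu\|_1$.
\end{itemize}
This gives $\E[\Delta L_+\mid x]\le C-\varepsilon\|x\|_1$ directly, using the support $U$ itself. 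You need no ordering of the $y_{v,e}$, no thresholds, and no comparison with $\sqrt{L_+}$. The rest of your argument (Foster, then the transfer to MaxWeight) then goes through.
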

\begin{proof}
	First, for clarity's sake, we define our policy $\Phi$. It assigns the initial buffer state arbitrarily. Then, at time $t$, it considers the support $U_t$ of the buffer state $X(t)$, and takes an assignment rate $\nu_t$ satisfying Condition \eqref{eq:onlcond}, and proceeds as usual.
	
	To show that $\Phi$ is maximally stabilising, consider the buffer state $X(t) = x \in \R^G$ at time $t$. For given pairs $(u, e), (v, e) \in G$, assume that $k_{u, e}$ class-$u$ and $k_{v, e}$ class-$v$ item arrives and are assigned to the matching type $e$. Denote $e_G = \min_{(v, e) \in G} e_v$, we have that the change $\Delta \left[\left(\frac{x_{u, e}}{e_u} - \frac{x_{v, e}}{e_v}\right)^2\right]$ of the term $\left(\frac{x_{u, e}}{e_u} - \frac{x_{v, e}}{e_v}\right)^2$ is given by
	\begin{align*}
		& \Delta \left[\left(\frac{x_{u, e}}{e_u} - \frac{x_{v, e}}{e_v}\right)^2\right]\\
		= &2\left(\frac{k_{u, e}}{e_u} - \frac{k_{v, e}}{e_v}\right)\left(\frac{x_{u, e}}{e_u} - \frac{x_{v, e}}{e_v}\right) + \left(\frac{k_{u, e}}{e_u} - \frac{k_{v, e}}{e_v}\right)^2 \\
		\leq & \frac{k_{u, e}^2}{e_u^2} + \frac{k_{v, e}^2}{e_v^2} + 2\left(\frac{k_{u, e}}{e_u} - \frac{k_{v, e}}{e_v}\right)\left(\frac{x_{u, e}}{e_u} - \frac{x_{v, e}}{e_v}\right).
	\end{align*}
	Summing over all such terms, we have
	\[
	\begin{aligned}
		& \Delta L_+(X(t))\\
		= & \frac{1}{2} \sum_{e \in E} \frac{1}{|e|}\sum_{\{u, v\} \in e^2} e_u e_v \left[\frac{k_{u, e}^2}{e_u^2} + \frac{k_{v, e}^2}{e_v^2} + 2\left(\frac{k_{u, e}}{e_u} - \frac{k_{v, e}}{e_v}\right)\left(\frac{x_{u, e}}{e_u} - \frac{x_{v, e}}{e_v}\right)\right] \\
		\leq & \frac{1}{2} \sum_{e \in E} \sum_{\{u, v\} \in e^2} k_{u, e}^2 + k_{v, e}^2 \\
		& + \sum_{e \in E} \frac{1}{|e|}\sum_{\{u, v\} \in e^2} \left(\frac{e_v}{e_u} k_{u, e} x_{u, e} + \frac{e_u}{e_v} k_{v, e} x_{v, e} - k_{u, e} x_{v, e} - k_{v, e} x_{u, e}\right)\\
		\leq & \frac{|V|}{2} \sum_{e \in E} \sum_{u \in e} k_{u, e}^2 \\
		& + \frac{1}{2} \sum_{e \in E} \frac{1}{|e|} \sum_{u \in e} \sum_{v \in e} \left(\frac{e_v}{e_u} k_{u, e} x_{u, e} + \frac{e_u}{e_v} k_{v, e} x_{v, e} - k_{u, e} x_{v, e} - k_{v, e} x_{u, e}\right).
	\end{aligned}
	\]
	Let us deal with the big sum term-by-term. The first term (and by symmetry, the second term) reads
	\[
		\sum_{u \in e} \sum_{v \in e} \frac{e_v}{e_u} k_{u, e} x_{u, e} = \sum_{u \in e} \frac{1}{e_u} k_{u, e} x_{u, e} \sum_{v \in e} |e| =  \sum_{u \in e} \frac{|e|}{e_u} k_{u, e} x_{u, e}.
	\]
	The third term (and by symmetry, the fourth term) reads
	\[
		\sum_{u \in e} \sum_{v \in e} k_{u, e} x_{v, e} = \sum_{u \in e} k_{u, e} \sum_{v \in e} x_{v, e} = \sum_{u \in e} k_{u, e} x_e,
	\]
	where we denote $x_e = \sum_{v \in e} x_{v, e}$. Thus in summary, we have
	\[
	\begin{aligned}
		\Delta L_+(X(t))
		& \leq \frac{|V|}{2} \sum_{e \in E} \sum_{u \in e} k_{u, e}^2 + \sum_{e \in E} \frac{1}{|e|} \left[\sum_{u \in e} \frac{|e|}{e_u} k_{u, e} x_{u, e} - \sum_{u \in e} k_{u, e} x_e\right] \\
		& \leq \frac{|V|}{2} \sum_{e \in E} k_u^2 + \sum_{(u, e) \in G} k_{u, e} \left(\frac{x_{u, e}}{e_u} - \frac{x_e}{|e|}\right),
	\end{aligned}
	\]
	where we denote $k_u = \sum_{e \ni u} k_{u, e}$ the total number of arriving class-$u$ items.
	
	Now for a given assignment rate $\nu$ and taking expectation over the arrivals, we have
	\begin{align*}
		\E[\Delta L_+ \mid X(T) = x]
		& \leq \frac{|V|}{2} \|\mu\|_2^2 + \sum_{(u, e) \in G} \nu_{u, e} \left(\frac{x_{u, e}}{e_u} - \frac{x_e}{|e|}\right).
	\end{align*}
	
	The second term can be rewritten as follow:
	\begin{align*}
		& \sum_{(u, e) \in G} \nu_{u, e} \left(\frac{x_{u, e}}{e_u} - \frac{x_m}{|e|}\right) = \sum_{e \in E} \left[\sum_{u \in e} \nu_{u, e} \left(\frac{x_{u, e}}{e_u} - \frac{x_m}{|e|}\right)\right] \\
		= & \sum_{e \in E} \left[\sum_{u \in e} \nu_{u, e} \frac{x_{u, e}}{e_u} - \left(\sum_{u \in e} \nu_{u, e}\right)\frac{x_m}{|e|}\right] \\
		= & \sum_{e \in E} \left[\sum_{u \in e} \nu_{u, e} \frac{x_{u, e}}{e_u} - \nu_e \frac{x_m}{|e|}\right] \\
		= & \sum_{e \in E} \left[\sum_{u \in e} \nu_{u, e} \frac{x_{u, e}}{e_u} - \left(\sum_{u \in e} x_{u, e}\right)\frac{\nu_e}{|e|}\right] \\
		= & \sum_{e \in E} \left[\sum_{u \in e} x_{u, e} \left(\frac{\nu_{u, e}}{e_u} - \frac{\nu_e}{|e|}\right)\right] \\
		= & \sum_{(u, e) \in G} x_{u, e} \left(\frac{\nu_{u, e}}{e_u} - \frac{\nu_e}{|e|}\right) \\
		= & \sum_{(u, e) \in G \setminus U} x_{u, e} \left(\frac{\nu_{u, e}}{e_u} - \frac{\nu_e}{|e|}\right) + \sum_{(u, e) \in U} x_{u, e} \left(\frac{\nu_{u, e}}{e_u} - \frac{\nu_e}{|e|}\right) \\
	\end{align*}
	where $U$ is the support of $x$.

	The first term can be bounded trivially, as by definition, we have $x_{u, e} < e_u$ for all $(u, e) \not\in U$, 
	\begin{align*}
		\sum_{(u, e) \in G \setminus U} x_{u, e} \left(\frac{\nu_{u, e}}{e_u} - \frac{\nu_e}{|e|}\right)  
		& \leq \sum_{(u, e) \in G \setminus U} x_{u, e} \left(\frac{\nu_{u, e}}{e_u} + \frac{\nu_e}{|e|}\right) \\
		& \leq \sum_{(u, e) \in G \setminus U} e_u \left(\frac{\nu_{u, e}}{e_u} + \frac{\nu_e}{|e|}\right) \\
		& \leq \sum_{e \in E} \sum_{u \in e} \nu_{u, e} + \frac{e_u}{|e|} \nu_e = 2\|\mu\|_1 \\
	\end{align*}
	
	For the second term, by definition of support, we have $\frac{\nu_{u, e}}{e_u} - \frac{\nu_e}{|e|}$ to be negative for all $(u, e) \in U$. In particular, since $G_U$ is finite, there exists $\varepsilon > 0$ such that $\frac{\nu_{u, e}}{e_u} - \frac{\nu_e}{|e|} \leq -\varepsilon$ for all $(u, e) \in U$. Moreover, since $G$ is finite, there are finitely many supports $U$, so in particular, we can even choose $\varepsilon$ so that this holds \textit{uniformly} for all $U$.
	
	As such, we have
	\[
		x_{u, e} \left(\frac{\nu_{u, e}}{e_u} - \frac{\nu_e}{|e|}\right) \leq -\varepsilon x_{u, e},
	\]
	implying
	\begin{align*}
		\E[\Delta L_+ \mid X(T) = x]
		& \leq \frac{|V|}{2} \|\mu\|_2^2 + 2\|\mu\|_1 - \varepsilon \sum_{(u, e) \in U} x_{u, e} \\
		& \leq \frac{|V|}{2} \|\mu\|_2^2 + 2\|\mu\|_1 + \varepsilon\sum_{(u, e) \in G} e_u - \varepsilon \sum_{(u, e) \in G} x_{u, e}\\
		& \leq C - \varepsilon \|x\|_1 \\
	\end{align*}
	where for the penultimate inequality, we use the fact that $x_{u, e} < e_u$ for all $(u, e) \not\in U$. For the last inequality, we denote $C = \frac{|V|}{2} \|\mu\|_2^2 + 2\|\mu\|_1 + \varepsilon\|G\|_1$, which is absolute constant depending only $G$ and $\mu$ (note that our choice of $\varepsilon$ does not depend on $U$).
	
	Thus, for an arbitrarily chosen $\delta > 0$ and
	\[
		C_\delta = \left\{x \Biggr\vert \|x\|_1 \geq \frac{C + \delta}{\varepsilon}\right\},
	\]
	we have that $\E[\Delta L_+ \mid X(t) = x] \leq -\delta$ for all $x \not\in C_\delta$. In particular, as $\Phi$ is Markovian, the chain $X(t)$ is a Markov chain, so the negative drift implies that
	\[
		\E[\min\{t \in \N \mid X(T + t) \in C_\delta\} \mid X(T) = x] \leq \frac{L_+(x)}{\delta},
	\]
	which is finite, as desired.
	
	Finally, since $L_+$-MaxWeight optimises the drift, we also get as a corollary that $L_+$-MaxWeight, in the online assignment setting, also stabilises $(G, \mu)$.
\end{proof}

Combining with Lemma \ref{lemma: necessity of onlcond}, we have the necessity and sufficiency of Condition \eqref{eq:onlcond}.

\begin{theorem}
	\label{theorem: necessity and sufficiency of onlcond}
	Let $G \in \R_{\geq 0}^{V\times E}$. Condition \eqref{eq:onlcond} is necessary and sufficient for stability within the class of online assignment policies: $(G, \mu)$ is stabilisable by an online assignment policy if and only if Condition \eqref{eq:onlcond} is satisfied.
	
	When this is the case, there are stabilising online assignment policies which are Markovian, randomised, and size-based. Moreover, $(G, \mu)$ is also stabilised by the online assignment variant of $L_+$-MaxWeight.
\end{theorem}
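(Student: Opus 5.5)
The theorem follows by combining the two lemmas already proved in this section, so the plan is mostly to match their hypotheses and conclusions. Both lemmas assume $e_v > 0$ for all $(v, e) \in G$. Taking $G \in \R_{\geq 0}^{V\times E}$ with $(v, e) \in G$ exactly when $e_v \neq 0$ gives this positivity.

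For the ``only if'' direction, I will invoke Lemma \ref{lemma: necessity of onlcond} directly. Suppose some online assignment policy $\Phi$ stabilises $(G, \mu)$ in the sense of a compact set $C \subset \R^G_{\geq 0}$ with finite expected return times. The auxiliary process $Y$ and the separating weight vector $w \geq 0$ produce a contradiction whenever Condition \eqref{eq:onlcond} fails. That lemma places no restriction on $\Phi$ beyond its being an online assignment policy: it may be history-dependent, and it need not be Markovian.

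For the ``if'' direction, I will invoke Lemma \ref{lemma: sufficiency of onlcond}. Given Condition \eqref{eq:onlcond}, the policy that at each time $t$ reads the support $U_t$ of $X(t)$ and uses the corresponding assignment rate $\nu^{U_t}$ satisfies the drift bound
\[
	\E[\Delta L_+ \mid X(t) = x] \leq C - \varepsilon\|x\|_1 .
\]
Here $\varepsilon > 0$ is uniform over the finitely many supports. The bound gives a finite expected hitting time of a bounded, hence compact, sublevel set $\{\|x\|_1 \leq (C+\delta)/\varepsilon\}$. The set written in that proof has the inequality the wrong way round; the intended set is this sublevel set.

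This policy has all three properties claimed in the theorem.
\begin{itemize}
	\item It is \emph{Markovian}: the rate depends only on the current state $X(t)$.
	\item It is \emph{size-based}: the support depends only on the quantities $X_{v, e}(t)$.
	\item It is \emph{randomised}: each arriving batch of a class is sent to an edge with probability $\nu_{v, e}/\mu_v$.
\end{itemize}

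For $L_+$-MaxWeight, I will use the standard comparison argument. At each state, MaxWeight chooses the assignment that minimises the linear part of the drift bound, namely $\sum_{(u, e)} \nu_{u, e}\bigl(\frac{x_{u, e}}{e_u} - \frac{x_e}{|e|}\bigr)$. This minimum is attained at a vertex of the polytope of assignment rates, which gives a deterministic policy. Its drift is therefore at most that of the randomised policy, so the same bound $C - \varepsilon\|x\|_1$ holds and it also stabilises.

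The only delicate point is this MaxWeight comparison. The proof of Lemma \ref{lemma: sufficiency of onlcond} bounds the drift by a constant plus a term linear in $\nu$. The constant $\frac{|V|}{2}\|\mu\|_2^2$ does not depend on the policy, because it bounds the quadratic terms by $k_u^2$ whichever edge the batch is sent to. Hence minimising the linear term is enough, and I would state this explicitly. With that, the theorem is the conjunction of the two lemmas.
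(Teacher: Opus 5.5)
Your proposal is correct and follows the paper's route: the theorem is obtained by combining Lemma~\ref{lemma: necessity of onlcond} (necessity) with Lemma~\ref{lemma: sufficiency of onlcond} (sufficiency), and the latter supplies both the randomised support-based policy and the $L_+$-MaxWeight comparison. Your two refinements are also accurate: the set $C_\delta$ in the paper's proof should indeed be the compact sublevel set $\{\|x\|_1 \leq (C+\delta)/\varepsilon\}$, and because the quadratic constant in the drift bound does not depend on the assignment, minimising the linear term suffices for MaxWeight to inherit the bound $C - \varepsilon\|x\|_1$; that minimum is attained at a vertex of the assignment-rate polytope, which is a deterministic assignment.
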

	\section{Reassignment and general policies}
\label{section: reassigment and general policies}

In this Section, we continue assuming that $e_v > 0$ for all $(v, e) \in G$.

As in the unweighted case \cite[Section 4]{Nguyen2026}, to accommodate general policies, we enlarge the online assignment framework by adding the notion of reassignment. The differences in the setting of the weighted case requires that the definition of reassignment rate be generalised.

\begin{definition}
	Given a buffer state $x$, a reassignment rate $\gamma$ is a distribution on $K_x = \{k = \left(k_{v, e, e'}\right)_{v \in e, e'} \mid \sum_{e'} k_{v, e, e'} = x_{v, e} \wedge k_{v, e, e'} \geq 0\}$, and we say that this reassignment rate $\gamma$ is adapted to $x$.
\end{definition}

\textit{Any} policy $\Phi$ can have its behaviour at time $t$ described by a pair $(\nu, \gamma)$ of an assignment rate $\nu$ and a reassignment rate $\gamma$ adapted to the current buffer state $X(t)$, as follow.
\begin{itemize}
	\item At time $t$, we first assign the newly arriving items. If some class-$v$ items arrive, we assign them all to the matching type $m$ with probability $\frac{\nu_{v, e}}{\mu_e}$.
	\item Then, we draw a sequence $k = (k_{v, e, e'})_{v \in e, e'}$ from $\gamma$, and for each $v \in e, e'$, we reassign $k_{v, e, e'}$ class-$v$ items from the matching type $m$ to $e'$.
\end{itemize}
The definition of reassignment rates ensures that this latter operation is valid, and the resulting buffer state still admits a support (in the sense of Definition \ref{definition: support}) as its support (in the usual sense).

To derive a necessary criterion for stability in this new context, we go back to the informal argument in the previous Subsection. Let
\[
	\gamma_{v, e, e'} = \int_{K_x} k_{v, e, e'} \dd \gamma(k),
\]
this quantity denotes the average number of class-$v$ items reassigned from $m$ to $e'$, thus the change in the number of class-$v$ items assigned to $m$ is given by
\[
	\Gamma_{v, e} = \sum_{e' \ni v, e' \neq m} \gamma_{v, e', m} - \gamma_{v, e, e'}.
\]
At the same time, these newly assigned items bring with them $\frac{mu_u}{|e|}\Gamma_{v, e}$ matchings. A similar reasoning suggests that if we choose to reassign, the quantity
\[
	\frac{\Gamma_{v, e}}{e_u} - \frac{\Gamma_e}{|e|}
\]
should be negative for all $(v, e) \in U$, where $\Gamma_e = \sum_{v \in m} \Gamma_{v, e}$. Thus, by combining both terms of assignment and reassignment, we expect the negative sign for the quantity
\[
	\frac{\nu_{v, e}}{e_u} - \frac{\nu_e}{|e|} + \frac{\Gamma_{v, e}}{e_u} - \frac{\Gamma_e}{|e|}.
\]
Now we can formulate our necessary condition for stability, this time within the class of all policies.
\begin{equation}
	\label{eq:gencond}
	\textrm{\parbox{.8\textwidth}{For all supports $U$, there exists a pair $(\nu^U, \gamma^U)$ such that for all $(v, e) \in U$,
	\[
		\frac{1}{e_u} \left(\nu^U_{v, e} + \Gamma^U_{v, e}\right) < \frac{1}{|e|}\left(\nu^U_e + \Gamma^U_e\right).
	\]
	}}
\end{equation}

\begin{remark}
	Readers of the previous article \cite{Nguyen2026} may notice that unlike the unweighted case, our Condition \eqref{eq:gencond} does not have the scalar $\alpha$. This is because in the unweighted case, each reassignment only move one item, and we may repeat this step an arbitrary number of time. However, as we do not know the smallest unit of item in the weighted case, we have chosen to combine all the reassignments into one step. 
\end{remark}

Much of the proof of necessity remains the same.
\begin{lemma}
	\label{lemma: necessity of gencond}
	If $(G, \mu)$ is stabilisable, then Condition \eqref{eq:gencond} is satisfied.
\end{lemma}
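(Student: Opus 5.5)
The plan follows the proof of Lemma \ref{lemma: necessity of onlcond}, with the online assignment dynamics replaced by the assignment--reassignment description of an arbitrary policy. First, fix a stabilising policy $\Phi$ and a compact set $C$ as in the definition of stability. Any action of $\Phi$ at time $t$ is described by a pair $(\nu_t, \gamma_t)$, where $\gamma_t$ is adapted to the current buffer $X(t)$. The matchings actually realised by $\Phi$ are encoded as reassignments into the corresponding hyperedges followed by first-come-first-matched matching inside each hyperedge, so the enlarged buffer $X$ is well defined for every policy.

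As before, define the auxiliary process
\[
Y_{v,e}(t) = \frac{X_{v,e}(t)}{e_v} - \frac{1}{|e|}\sum_{u \in e} X_{u,e}(t).
\]
This quantity is invariant under the realisation of type-$e$ matchings, since a matching removes $e_u$ of each $u\in e$ and hence $|e|$ in total. Its increment therefore comes only from assignment and reassignment. Conditionally on $\mathcal{H}_t$, the expected increment is
\[
\frac{\nu_{v,e}+\Gamma_{v,e}}{e_v} - \frac{\nu_e+\Gamma_e}{|e|}.
\]

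Next, argue by contradiction. Suppose Condition \eqref{eq:gencond} fails for some support $U$. Let $\mathcal{Q}$ be the set of achievable pairs $(\nu,\Gamma)$, and let $\Delta$ be the linear map sending $(\nu,\Gamma)$ to the vector of the quantities above indexed by $U$. The map $\gamma \mapsto \Gamma$ factors through the mean $(\gamma_{v,e,e'})$, which ranges over the convex set of nonnegative flows with $\sum_{e'}\gamma_{v,e,e'} = x_{v,e}$, so $\mathcal{Q}$ is convex. Then $\Delta(\mathcal{Q})$ is disjoint from the open negative orthant. The separating hyperplane argument gives a nonzero $w\in\R^U_{\geq 0}$ with $\langle w,\Delta q\rangle\ge 0$ for all $q\in\mathcal{Q}$. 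Setting $L_-(y)=\sum_{(v,e)\in U} w_{v,e}y_{v,e}$, we get $\E[L_-(Y(t+1))-L_-(Y(t))\mid\mathcal{H}_t]\ge0$, hence $L_-(Y(0))\le\E[L_-(Y(t))]$. Optional stopping at $\tau=\min\{t: X(t)\in C\}$, which is integrable by stability, together with the bound $e_G\|Y\|\le 2\|X\|_1$, gives $L_-(Y(0))\le \frac{2}{e_G}\max_C\|x\|_1$. This contradicts the freedom to choose the initial state so that $L_-(Y(0))$ is large.

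The main obstacle is that the achievable set depends on the buffer state $x$: a reassignment rate must be adapted to $X(t)$, so $\mathcal{Q}$ is not a single compact polytope as in Lemma \ref{lemma: necessity of onlcond}. I would try to fix this first by reading Condition \eqref{eq:gencond} as allowing any $x$ with support $U$, which is exactly the reading in which the failure of the condition is meaningful. The failure of \eqref{eq:gencond} at $U$ then gives the sign condition for all such $x$ simultaneously. Since $\Gamma$ scales with $x$, $\Delta(\mathcal{Q})$ is a convex cone plus a polytope, and it suffices to separate $\R^U_{<0}$ from the closure of the union over $x$. Fixing $w$ uniformly in $x$ requires convexity of this union, which holds because positive combinations of states with support pattern $U$ again have support pattern $U$.

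A second subtlety is that $Y$ may leave the regime where the support is $U$. To handle this, I would choose the initial state inside the support pattern $U$ with $L_-(Y(0))$ huge. I would then note that the drift inequality holds for all admissible actions whenever the state lies in that pattern, and that before the state exits the pattern or reaches $C$, $L_-$ is bounded below by a state-independent constant. This yields the same contradiction, using stopping at the earlier of $\tau$ and the exit time. Finally, the integrability of the increments, needed for optional stopping, follows from $\|\mu\|_1<\infty$ together with the bounded number of items that a single reassignment step can move relative to $\|X(t)\|_1$.
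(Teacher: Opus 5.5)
Your skeleton is the paper's own proof, which only redefines $Y$ to absorb reassignments and says the rest of Lemma~\ref{lemma: necessity of onlcond} goes through. You are right that it does not go through verbatim. Once reassignment is allowed, the available actions depend on the current buffer. So the sign information extracted from the failure of \eqref{eq:gencond} at $U$ only controls the drift of $L_-$ in states with support $U$.

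\textbf{Where the repair fails.} Your patch for this does not close the argument. Stopping at $\sigma=\tau\wedge\tau_{\mathrm{exit}}$ gives $L_-(Y(0))\le\E[L_-(Y(\sigma))]$. To contradict a large $L_-(Y(0))$ you need an \emph{upper} bound on $L_-(Y(\sigma))$ that does not depend on the initial state; a lower bound on $L_-$ before the exit points the wrong way. On $\{\tau_{\mathrm{exit}}<\tau\}$ no such upper bound exists. Nothing prevents the process from leaving the pattern while the other $U$-coordinates are still of the order of their initial values. For instance, a reassignment can bring one pair of $U$ below $e_v$, and that action is covered by your drift inequality. Nor can you restart after the exit. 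Once some pair $(v',e')\notin U$ holds many items, reassigning out of it can decrease $L_-$ by an amount proportional to its content.

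\textbf{The missing idea.} Replace $L_-$ on the enlarged buffer by a class-level linear potential, which does not see the policy, the reassignments or the support pattern.
\begin{itemize}
\item Failure of \eqref{eq:gencond} at $U$ implies failure of \eqref{eq:onlcond} at $U$ (take no reassignment). So you only need the separation for the state-independent polytope $\Delta(\mathcal{P})$, exactly as in Lemma~\ref{lemma: necessity of onlcond}.
\item Expanding, $\langle w,\Delta\nu\rangle=\sum_{(v,e)\in G}\nu_{v,e}c_{v,e}$ with $c_{v,e}=\frac{w_{v,e}}{e_v}-\frac{W_e}{|e|}$, where $w_{v,e}=0$ off $U$ and $W_e=\sum_{u\in U_e}w_{u,e}$. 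Set $\phi_v=\min_{e\ni v}c_{v,e}$.
\item For every $e$, $\sum_{v\in e}e_v\phi_v\le\sum_{v\in e}e_vc_{v,e}=0$, so no matching ever decreases $\langle\phi,Z\rangle$.
\item Taking $\nu$ concentrated on the minimisers gives $\langle\phi,\mu\rangle\ge0$.
\item $\phi$ has a positive coordinate. Otherwise $\mu_v>0$ for all $v$ (the standing assumption here) forces $\phi=0$, hence every $c_{v,e}=0$. Looking at a vertex of $e$ outside $U_e$ then gives $W_e=0$ for all $e$, contradicting $w\neq0$.
\item Pathwise, $\langle\phi,Z(\tau)\rangle\ge\langle\phi,Z(0)\rangle+\sum_{t=1}^{\tau}\langle\phi,A_t\rangle$. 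Wald's identity with $\E[\tau]<\infty$ gives $\E\sum_{t=1}^{\tau}\langle\phi,A_t\rangle=\E[\tau]\langle\phi,\mu\rangle\ge0$.
\item Hence $\langle\phi,Z(0)\rangle\le\|\phi\|_\infty\max_{z\in C}\|z\|_1$ for every initial buffer. This fails once you load enough items of a class $v_0$ with $\phi_{v_0}>0$.
\end{itemize}
This also avoids the state-dependent separation and the optional-stopping bookkeeping. It in fact shows directly that stabilisability implies \eqref{eq:onlcond}.
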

\begin{proof}
	We only have to adapt the construction of the process $Y$ from the proof of Lemma \ref{lemma: necessity of onlcond}. In particular, let $Y'(t)$ be the process $Y$ at time $t$ after the assignment. When there is a reassignment $k$, writing $k_{v, e} = \sum_{e' \ni v, e' \neq m} k_{v, e', m} - k_{v, e, e'}$ and $k_e = \sum_{v \in m} k_{v, e}$, we define
	\[
		Y_{v, e}(t+1) = Y'_{v, e}(t) - \frac{k_{v, e}}{e_v} + \frac{k_e}{|e|}.
	\]
	This modification is precisely to retain the formula
	\[
		Y_{v, e}(t) = \frac{1}{e_v} X_{v, e}(t) - \frac{1}{|e|} \sum_{e \ni u} X_{u, e}(t).
	\]
	The rest of the proof goes analogously to that of Lemma \ref{lemma: necessity of onlcond}.
\end{proof}

Similarly, as with the unweighted case \cite[Lemma 4.2]{Nguyen2026}, this condition is also sufficient for stability. The proof goes analogously to that for online assignment polices, which we omit for the sake of brevity. 

\begin{lemma}
	\label{lemma: sufficiency of gencond}
	If Condition \eqref{eq:gencond} is satisfied, $(G, \mu)$ is stabilisable by a randomised Markovian size-based policy and by $L_+$-MaxWeight.
\end{lemma}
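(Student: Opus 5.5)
We adapt the drift argument of Lemma \ref{lemma: sufficiency of onlcond}, with the same function
\[
	L_+(x) = \frac{1}{2} \sum_{e \in E} \frac{1}{|e|}\sum_{\{u, v\} \in e^2} e_u e_v \left(\frac{x_{u, e}}{e_u} - \frac{x_{v, e}}{e_v}\right)^2 ,
\]
now applied to the enlarged buffer $X(t)$, where the items are assigned and then reassigned.

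The policy $\Phi$ is defined as follows. At time $t$ it reads the support $U_t$ of $X(t)$ and uses the pair $(\nu^{U_t}, \gamma^{U_t})$ supplied by Condition \eqref{eq:gencond}. The arrivals are assigned according to $\nu^{U_t}$, and a reassignment $k$ is drawn from $\gamma^{U_t}$ and performed. Condition \eqref{eq:gencond} gives a reassignment rate adapted to one representative state, whereas $\Phi$ must act on every $x$ with support $U$. We fix this by rescaling. Since $x_{v,e} \geq e_v$ on $U$, any reassignment of a bounded amount $\leq \min_{(v,e)\in U} e_v$ is feasible on every state with support $U$. We can therefore realise the mean vector $\Gamma^U$ up to a common positive factor $\alpha_U$: scale the realised reassignment by $\alpha_U$ and keep the assignment rate $\nu^U$ as given. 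The strict inequality survives this rescaling. Pairs outside $U$ carry at most $e_v$ items and are not reassigned. The rule depends only on the sizes and the current support, so $\Phi$ is Markovian, size-based and randomised.

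For the drift computation, write $\tilde k_{u,e}$ for the total signed change of $x_{u,e}$ in one step, namely the arrivals assigned to $(u,e)$ plus the net reassignment into $(u,e)$. The algebraic identity in the proof of Lemma \ref{lemma: sufficiency of onlcond} only uses this increment vector, so it gives
\[
	\Delta L_+ \leq c \sum_{(u,e)} \tilde k_{u,e}^2 + \sum_{(u,e)\in G} x_{u,e}\left(\frac{\tilde k_{u,e}}{e_u} - \frac{\tilde k_e}{|e|}\right).
\]
The quadratic term has bounded expectation. Arrivals contribute through $\|\mu\|_2^2$, and reassignments contribute a bounded amount because the reassigned quantities are bounded by construction. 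This bound is why we bounded the reassigned amounts rather than reassigning in proportion to $x$.

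Taking expectations, the linear term becomes
\[
	\sum x_{u,e}\left(\frac{\nu_{u,e}+\alpha_U\Gamma^U_{u,e}}{e_u} - \frac{\nu_e+\alpha_U\Gamma^U_e}{|e|}\right).
\]
On $G\setminus U$ this sum is bounded by a constant, since there $x_{u,e}<e_u$. On $U$ each coefficient is at most $-\varepsilon$, uniformly, because there are finitely many supports. This yields $\E[\Delta L_+ \mid X(t)=x] \leq C - \varepsilon\|x\|_1$, which is negative outside a compact set. The hitting-time bound $L_+(x)/\delta$ then follows exactly as before. Finally, $L_+$-MaxWeight myopically minimises the expected drift over all assignment and reassignment choices, so its drift is no larger than that of $\Phi$, and it stabilises $(G,\mu)$ as well.

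The step I expect to be the main obstacle is the feasibility and rescaling of the reassignment. One issue is that $\gamma^U$ is adapted to a single state. The other is that rescaling the reassignment part alone must not destroy the strict inequality. If it does, because the sign of the $\nu$-part alone is unfavourable, the first fallback is to rescale both $\nu$- and $\Gamma$-contributions jointly by executing the reassignment over several time steps. The second fallback is to restrict the policy to states with all $x_{v,e}$ on $U$ large, where the representative's reassignment is directly feasible. The sub-threshold states would then be absorbed into the compact set.
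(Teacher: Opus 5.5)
Your drift skeleton is the intended one: the same $L_+$, a support-driven randomised policy, and the comparison argument for $L_+$-MaxWeight. The paper itself only says the proof is analogous to Lemma \ref{lemma: sufficiency of onlcond}. The gap is in the feasibility patch you add. Write $a_{v,e}=\frac{\nu^U_{v,e}}{e_v}-\frac{\nu^U_e}{|e|}$ and $b_{v,e}=\frac{\Gamma^U_{v,e}}{e_v}-\frac{\Gamma^U_e}{|e|}$. Condition \eqref{eq:gencond} gives only $a_{v,e}+b_{v,e}<0$ on $U$. After shrinking you get $a_{v,e}+\alpha_U b_{v,e}=(1-\alpha_U)a_{v,e}+\alpha_U(a_{v,e}+b_{v,e})$. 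This is positive for small $\alpha_U$ as soon as $a_{v,e}>0$ at some $(v,e)\in U$, which is the typical situation in which reassignment is needed at all. So ``the strict inequality survives this rescaling'' is false, and with it the uniform $-\varepsilon$ bound on $U$ fails.

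Neither fallback helps. Spreading one reassignment over $n$ steps gives $a+b/n$ per step, because every arriving batch must still be assigned through $\nu^U$, whose class sums are pinned to $\mu_v$. The states with support $U$ but some $U$-coordinate below a threshold form an unbounded set, so they cannot be absorbed into the compact set.

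The missing idea is to keep the pair unscaled and put the threshold into the support instead. Fix the state $x^U$ to which $\gamma^U$ is adapted, and choose $R$ at least every $e_v$ and every entry of every $x^U$. At state $x$, play $(\nu^{U'},\gamma^{U'})$ with $U'=\{(v,e)\mid x_{v,e}\geq R\}$; this is again a support because it is contained in the support of $x$. Then:
\begin{itemize}
\item outflows from pairs of $U'$ are feasible;
\item reassigned amounts are at most $R$, so the quadratic term stays bounded;
\item pairs outside $U'$ hold less than $R$ items, so they contribute only a constant to the linear term;
\item on $U'$ the coefficients are exactly those of \eqref{eq:gencond}, hence at most $-\varepsilon$.
\end{itemize}
This yields the bound $C_R-\varepsilon\|x\|_1$.

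That repair, however, needs $\gamma^U$ to move items only out of pairs in $U$. Your sentence ``pairs outside $U$ \dots\ are not reassigned'' silently alters $\Gamma^U$ whenever $\gamma^U$ uses the sub-threshold stock $0<x^U_{v,e}<e_v$ that $K_x$ allows. Dropping a move of $d$ items from $(v,e)\notin U$ to $(v,e')$ lowers $\Gamma^U_{e'}$ by $d$. It therefore raises the coefficient of every $(u,e')\in U$, $u\neq v$, by $d/|e'|$.

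This cannot be fixed inside the proof, because under that literal reading the lemma is false. Take the triangle $e=\{1,2\}$, $f=\{1,3\}$, $g=\{2,3\}$ with unit weights and deterministic batches $(0.05,0.005,0.005)$. It is unstable: every matching removing a class-$1$ item consumes a class-$2$ or class-$3$ item, and $\mu_1>\mu_2+\mu_3$. Now take $U=\{(1,e),(1,f)\}$, with a representative holding $0.95$ items of classes $2$ and $3$ at $g$ (a multiple of $0.005$, so in $K_2,K_3$). Assign class $1$ evenly to $e,f$, class $2$ to $e$ and class $3$ to $f$. Move $0.9$ items of class $2$ (resp.\ $3$) from $g$ to $e$ (resp.\ $f$). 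This gives $0.025<\frac{1}{2}(0.025+0.005+0.9)$ at both pairs of $U$. Checking the other $26$ supports case by case, each satisfies \eqref{eq:gencond} with reassignments out of pairs of $U$ only.

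So you must fix the reading explicitly: $\gamma^U$ is adapted to a state vanishing outside $U$. This is automatic in the unweighted case, where such pairs are empty. With that reading and the thresholded support, your argument goes through, including the $L_+$-MaxWeight comparison.
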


Combining Lemma \ref{lemma: necessity of gencond} and Lemma \ref{lemma: sufficiency of gencond}, we have a characterisation of stability.

\begin{theorem}
	\label{theorem: necessity and sufficiency of gencond}
	Let $G \in \R_{\geq 0}^{V\times E}$. Condition \eqref{eq:gencond} is necessary and sufficient for stability: $(G, \mu)$ is stabilisable if and only if Condition \eqref{eq:gencond} is satisfied.
	
	When this is the case, there are stabilising policies which are Markovian, randomised, and size-based. Moreover, $(G, \mu)$ is also stabilised by $L_+$-MaxWeight.
\end{theorem}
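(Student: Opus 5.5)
The statement is Theorem \ref{theorem: necessity and sufficiency of gencond}, and it combines two results already stated. The plan is to invoke Lemma \ref{lemma: necessity of gencond} for the forward direction and Lemma \ref{lemma: sufficiency of gencond} for the converse and for the description of the stabilising policies. Since the proof of Lemma \ref{lemma: sufficiency of gencond} was omitted, I would spell out how it follows the pattern of Lemma \ref{lemma: sufficiency of onlcond}.

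\textbf{Necessity.} This is exactly Lemma \ref{lemma: necessity of gencond}. Any policy can be encoded at each time by a pair $(\nu,\gamma)$ with $\gamma$ adapted to $X(t)$. The modified process $Y$ keeps the identity
\[
Y_{v,e}(t)=\frac{X_{v,e}(t)}{e_v}-\frac{1}{|e|}\sum_{u\in e}X_{u,e}(t).
\]
If Condition \eqref{eq:gencond} fails for some support $U$, the set of achievable drift vectors $\bigl(\frac{\nu_{v,e}+\Gamma_{v,e}}{e_v}-\frac{\nu_e+\Gamma_e}{|e|}\bigr)_{(v,e)\in U}$ avoids the open negative orthant. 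This set is convex, since mixtures of pairs $(\nu,\gamma)$ are again admissible. Separation then gives some $w\ge 0$, and with it a submartingale-type bound on $L_-(Y)$. This contradicts the finite expected return to the compact set $C$, because $L_-(Y(0))$ can be made arbitrarily large.

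\textbf{Sufficiency.} For each support $U$, fix a pair $(\nu^U,\gamma^U)$ witnessing Condition \eqref{eq:gencond}. The policy uses $(\nu^{U_t},\gamma^{U_t})$ where $U_t$ is the support of $X(t)$. It is randomised, Markovian and size-based. I would then redo the drift computation for $L_+$ with the increment $k_{u,e}$ replaced by the arrivals plus the net reassignment. The linear term becomes
\[
\sum_{(u,e)\in G}x_{u,e}\Bigl(\frac{\nu_{u,e}+\Gamma_{u,e}}{e_u}-\frac{\nu_e+\Gamma_e}{|e|}\Bigr).
\]
On $U$ this is at most $-\varepsilon\sum_{(u,e)\in U} x_{u,e}$, with $\varepsilon$ uniform over the finitely many supports. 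Off $U$ it is bounded because $x_{u,e}<e_u$ there. The result is a drift bound of the form $C-\varepsilon\|x\|_1$, and hence finite expected hitting times of a bounded set. Finally, $L_+$-MaxWeight minimises the drift bound myopically, so its drift is no larger and it stabilises as well.

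\textbf{Main obstacle.} The hard part is the quadratic term. Reassigned amounts $k_{v,e,e'}$ can be as large as $x_{v,e}$, so the quadratic term is no longer bounded by $\|\mu\|_2^2$. It can grow like $\|x\|^2$ and swamp the linear negative drift. My first attempt would be a scaling trick: by convexity of the admissible set, replace $\gamma^U$ by $\alpha\gamma^U$ for small $\alpha>0$, mixing with no reassignment. A stronger version is to apply the reassignments with a rate that is bounded independently of $x$. A large-magnitude reassignment can be split into bounded chunks, because the reassignment amounts adapted to a nonempty support are required only up to the scale needed on $U$. The strict inequality in Condition \eqref{eq:gencond} survives this change after renormalising $\nu$ appropriately. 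This keeps the quadratic term bounded by a constant and preserves a strictly negative linear drift. If that fails, I would fall back to a fluid-limit argument on $L_+(x)^{1/2}$, which tolerates increments proportional to $\|x\|$.
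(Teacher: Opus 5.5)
Your overall structure matches the paper's. The theorem is Lemma~\ref{lemma: necessity of gencond} plus Lemma~\ref{lemma: sufficiency of gencond}, your necessity sketch repeats the paper's, and the paper justifies sufficiency only by calling it analogous to Lemma~\ref{lemma: sufficiency of onlcond}. Where you go beyond that, the argument does not close.

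\textbf{The mixing trick fails.} Use $(\nu^U,\gamma^U)$ with probability $\alpha$ and $(\nu^U,0)$ otherwise. For $(v,e)\in U$ the drift coefficient is then
\[
\frac{\nu^U_{v,e}}{e_v}-\frac{\nu^U_e}{|e|}+\alpha\Bigl(\frac{\Gamma^U_{v,e}}{e_v}-\frac{\Gamma^U_e}{|e|}\Bigr),
\]
which tends to the pure-assignment coefficient as $\alpha\to0$. That coefficient need not be negative on $U$: if an assignment rate alone made it negative, \eqref{eq:onlcond} would already hold at $U$ and no reassignment would be needed. So strictness is lost exactly in the cases that matter. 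You also cannot renormalise $\nu$, since it is pinned by $\sum_{e\ni v}\nu_{v,e}=\mu_v$. Splitting a reassignment into bounded chunks over several steps only lowers its rate, which has the same defect. The fluid-limit fallback is only named. As written, sufficiency is not proved.

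\textbf{What is actually at stake.} The obstacle you found is a symptom of an ambiguity in what a witness $(\nu^U,\gamma^U)$ may be, and it must be resolved the same way for both directions.

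If each of the finitely many supports gets one fixed witness, as in your policy, the quadratic term depends only on the net change of each $x_{v,e}$. That change is then bounded independently of $x$, and the drift computation of Lemma~\ref{lemma: sufficiency of onlcond} goes through verbatim. What must be checked instead is that the witness can be executed at every state with support $U$. It may drain only pairs in $U$, and only amounts actually present. You can ensure this by raising the support threshold from $e_v$ to a constant $R\ge\max_{(v,e)\in G}e_v$ exceeding all drained amounts; this changes only the additive constant.

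On the other side, your necessity sketch leaves open which state the admissible pairs are adapted to. For $L_-(Y)$ to be a submartingale, the separation must hold against the convex hull of all reassignments available at all states the process visits. For the triangle $\{a,b\},\{a,c\},\{b,c\}$ with unit weights, that hull is every net reassignment with $\sum_{e\ni v}\Gamma_{v,e}=0$, and then the condition is too weak to be sufficient. Since $G$ is invertible, apply the perturbation from the proof of Lemma~\ref{lemma: sufficiency of inccond} to the real solution $\lambda=(\tfrac32,\tfrac32,-\tfrac12)$ of $G\lambda=\mu$ with $\mu=(3,1,1)$. No positivity is needed, and the result satisfies the strict inequalities at every support. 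Yet this model is unstable.

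So you must commit to the restricted reading (witnesses that drain only pairs in $U$, by bounded amounts) and prove sufficiency with the unmodified drift argument. You then still owe a necessity argument for that reading. The $L_-$ separation alone does not supply it, because it does not control a policy that drains pairs outside $U$.
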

	\section{Linear algebraic criterion and equivalence of criteria}
\label{section: linear algebraic criterion and equivalence of criteria}

In this Section, as is in the previous two Sections, we also assume that $e_v > 0$ for all $(v, e) \in G$.

\subsection{Linear algebraic criterion}
As we would expect from the unweighted case, Condition \eqref{eq:onlcond} and  \eqref{eq:gencond} are equivalent to another necessity condition expressed in terms of the existence of a positive solution to the conservation equation and the surjectivity of some matrix.

The expected linear algebraic necessity condition is thus given below. Much of the proof of necessity is directly from the unweighted case \cite[Lemma 5.1]{Nguyen2026}.

\begin{equation}
	\label{eq:inccond}
	\textrm{\parbox{.8\textwidth}{Let $\mu = (\mu_v)_{v \in V} \in \R_{\geq 0}^V$, then $G \lambda = \mu$ has a positive solution $\lambda \in \mathbb{R}_{> 0}^E$ and $G$ is surjective.
	}}
\end{equation}

Note that we are abusing notation, since in the pair $(G, \mu)$, $\mu$ is a \textit{distribution} on $\R_{> 0}^V$ and not the vector $(\mu_v)_{v \in V}$. However, as stability concerns only the latter and not the former, we identify $\mu$ with $(\mu_v)_{v \in V}$.

\begin{lemma}
	\label{lemma: necessity of incccond}
	Condition \eqref{eq:gencond} implies Condition \eqref{eq:inccond}.
\end{lemma}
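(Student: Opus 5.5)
The plan is to take, for each support $U$, the pair $(\nu^U,\gamma^U)$ given by Condition \eqref{eq:gencond}. From it we read off effective matching rates. We then show separately that $G\lambda=\mu$ has a positive solution and that $G$ is surjective.

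\textbf{Effective rates.} For a fixed support $U$, set $\rho_{v,e}=\nu^U_{v,e}+\Gamma^U_{v,e}$ and $\rho_e=\sum_{v\in e}\rho_{v,e}$. Reassignment only moves mass between hyperedges, so $\sum_{e\ni v}\Gamma^U_{v,e}=0$ and hence $\sum_{e\ni v}\rho_{v,e}=\mu_v$. Define $\lambda^U_e=\rho_e/|e|$, interpreted as the rate of type-$e$ matchings that the incoming mass could fuel. Then
\[
(G\lambda^U)_v=\sum_{e\ni v}e_v\frac{\rho_e}{|e|}=\mu_v+\sum_{e\ni v}e_v\Big(\frac{\rho_e}{|e|}-\frac{\rho_{v,e}}{e_v}\Big).
\]
Every term in the last sum with $(v,e)\in U$ is strictly positive, by Condition \eqref{eq:gencond}.

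\textbf{Surjectivity.} Argue by contradiction. If $G$ is not surjective, there is $w\in\R^V\setminus\{0\}$ with $w^\top G=0$, that is, $\sum_{v\in e}w_ve_v=0$ for every $e\in E$. Pick a hyperedge $e$ that meets $\{w_v\neq0\}$. Such an $e$ must contain both a $v$ with $w_v>0$ and a $v$ with $w_v<0$, since $e_v>0$. So $U=\{(v,e)\mid w_v>0\}$ is a support: for each $e$, either no vertex of $e$ has $w_v>0$, or some vertex of $e$ has $w_v<0$ and is therefore excluded from $U_e$. Apply Condition \eqref{eq:gencond} to this $U$ and compute
\[
0=\sum_e\frac{\rho_e}{|e|}\sum_{v\in e}w_ve_v=\sum_{(v,e)\in G}w_v e_v\frac{\rho_e}{|e|}.
\]
Compare this with $\sum_{(v,e)\in G}w_v\rho_{v,e}=\langle w,\mu\rangle$. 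On pairs in $U$ we have $w_ve_v\rho_e/|e|>w_v\rho_{v,e}$. We then need the terms with $w_v<0$ to go the same way. The fix is to also use the support built from $-w$ and add the two inequalities. One inequality gives $\langle w,\mu\rangle<\langle w,G\lambda\rangle$ up to the $w_v<0$ terms, and the other gives the reverse sign, which produces the contradiction. I expect this sign-bookkeeping to be the main obstacle: the condition only controls pairs in $U$, while the pairs outside $U$ carry no inequality. If splitting into $U_+$ and $U_-$ fails, I would take $U=\{(v,e)\mid w_v>0\}$ and work with a modified functional $\sum_v \max(w_v,0)$, using only that $w^\top G=0$. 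This mirrors the unweighted argument in \cite[Lemma 5.1]{Nguyen2026}.

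\textbf{Positive solution.} Take $U=\emptyset$, which is a support. For it, every $\lambda^U_e\geq0$ and $G\lambda^U=\mu$ when $\Gamma$ is not counted. For strict positivity, take for each $e$ the support $U^e$ consisting of all pairs $(v,e')$ with $e'\neq e$, together with all but one vertex of each $e'$. From \eqref{eq:gencond} we get $\rho_{e'}>0$ for every $e'$ meeting $U^e$. Averaging the resulting $\lambda^{U}$ over all supports gives a vector $\bar\lambda\in\R^E_{>0}$ whose image $G\bar\lambda=\bar\mu$ differs from $\mu$. Because $G$ is surjective (the previous step), we can correct by a small $\eta$ with $G\eta=\mu-G\bar\lambda$. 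The aim is to make $\mu-G\bar\lambda$ small by scaling: rescale $\bar\lambda$ by a factor $t$ close to the boundary of the cone, and use openness of $G(\R^E_{>0})$, which follows from surjectivity. Then $\mu$ lies in the interior of the cone $G(\R^E_{\geq0})$, and therefore in $G(\R^E_{>0})$. The cleanest route to this interiority is a separation argument: if $\mu$ lay on the boundary, a supporting $w$ would satisfy $w^\top G\geq0$ and $\langle w,\mu\rangle\leq 0$, and the support $\{(v,e)\mid w_v>0\}$ contradicts \eqref{eq:gencond} exactly as in the surjectivity step.
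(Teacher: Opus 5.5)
You take a genuinely different route from the paper, applying duality directly to the inequalities of \eqref{eq:gencond} instead of using openness of the stable set plus the conservation equation. Both of your steps have gaps, though.

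In the surjectivity step, adding the inequalities from $U_+=\{(v,e)\mid w_v>0\}$ and $U_-=\{(v,e)\mid w_v<0\}$ cannot close the bookkeeping. The two supports come with different pairs $(\nu^U,\gamma^U)$, so the uncontrolled terms of one are not cancelled by the other. Write $\delta_{v,e}=\rho_e/|e|-\rho_{v,e}/e_v$. All you know is $\delta>0$ on $U$ and $\sum_{v\in e}e_v\delta_{v,e}=0$ for each $e$. On a unit-weight hyperedge where $w$ takes the values $1,0,-1$, the choice $\delta=(1,-5,4)$ meets both constraints for $U_+$, yet $\sum_{v\in e}w_v\delta_{v,e}=-3<0$. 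The missing idea is to choose the support hyperedge by hyperedge, relative to the minimum of $w$. The set $U=\{(v,e)\mid w_v>\min_{u\in e}w_u\}$ is a support, and the zero-sum identity kills exactly the uncontrolled terms:
\[
-\langle w,\mu\rangle=\sum_{(v,e)\in G}w_ve_v\delta_{v,e}=\sum_{e\in E}\sum_{v\in U_e}\Bigl(w_v-\min_{u\in e}w_u\Bigr)e_v\delta_{v,e}>0 .
\]
The same statement for $-w$ gives the contradiction. Here $U\neq\emptyset$ because every hyperedge meeting the support of $w$ carries both signs.

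The positivity step is not a proof as it stands. For $U=\emptyset$ the condition is vacuous, and $(G\lambda^U)_v-\mu_v=\sum_{e\ni v}e_v\delta_{v,e}$ need not vanish, so the claim $G\lambda^U=\mu$ is false. The supports $U^e$ do not yield $\rho_{e'}>0$, and nothing makes $\mu-G\bar\lambda$ small.

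Only your final separation argument is viable, and it needs repair. When $w^\top G\geq 0$, the set $\{(v,e)\mid w_v>0\}$ need not be a support, since a hyperedge on which $w>0$ throughout has a full slice. A support on the large values of $w$ also has the wrong orientation: both sides of
\[
\sum_{(v,e)\in G}w_ve_v\delta_{v,e}=\sum_{e\in E}\frac{\rho_e}{|e|}\,(w^\top G)_e-\langle w,\mu\rangle
\]
are then nonnegative, so there is no contradiction. You need $U=\{(v,e)\mid w_v<\max_{u\in e}w_u\}$, which makes the left side negative. You then also need $\rho_e=\nu^U_e+\Gamma^U_e\geq 0$ to make the right side nonnegative. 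Condition \eqref{eq:gencond} does not supply this, because reassignment can drain more from a hyperedge than assignment brings in.

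The paper sidesteps all of this. It shows that the set of rates satisfying \eqref{eq:gencond} is open, by shifting one coordinate of $\nu^U$ per vertex while keeping $\gamma^U$. It then invokes the conservation equation $G\lambda_\eta=\eta$ at every nearby rate $\eta$, which gives surjectivity from $G(\lambda_\eta-\lambda_\mu)=\eta-\mu$. Positivity comes from solving at a slightly smaller rate and adding $\varepsilon\mathbbm{1}$.
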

\begin{proof}
	First, to show the surjectivity of $G_M$, we note that for a given hypergraph $G$, the set $\mu$ of arrival rates $\mu$ such that $(G, \mu)$ is stabilisable, is open.
	
	Indeed, fix $\mu$ such that $(G, \mu)$ satisfies Condition \eqref{eq:gencond}. For each support $U$, let $\nu^U$ and $\gamma^U$ be an assignment rate and a reassignment rate satisfying \eqref{eq:gencond}. Let
	\[
		\varepsilon_G = \min_{U} \min_{(u, e)\in G_U} \left|\frac{1}{e_u} \left[\nu^U_{v, e} + \Gamma^U_{v, e}\right] - \frac{1}{|e|}\left[\nu^U_e + \Gamma^U_e\right]\right| > 0.
	\]
	
	Moreover, for each support $U$ and each vertex $u$, choose an arbitrary hyperedge $e^U_u$ containing $u$ with $\nu^U_{u, e} > 0$, which must exist as $\sum_{e \ni u} \nu_{u, e} = \mu_u > 0$. Denote $\varepsilon^U_\nu = \min_{u} \nu_{u, e^U_u}$, and $\varepsilon_\nu = \min_U \varepsilon^U_\nu$.
	
	Now consider a given $\eta \in \mathbb{R}_{\geq 0}^V$ such that $\|\eta - \mu\|_1 = \max_{u \in V} |\eta_v - \mu_v| \leq \min \left(\varepsilon_\nu, \frac{\varepsilon_G}{3}\right)$. For a given support $U$, let $\nu = \nu^U$. For each $(u, e) \in G$, let
	\[
	\nu'_{u, e} = 
	\begin{cases}
		\nu_{u, e} + \eta_ u- \mu_u & \text{ if } e = e_u \\
		\nu_{u, e} & \text{ otherwise}
	\end{cases},
	\]
	we have that $\nu' \in \mathbb{R}_{\geq 0}^G$ is an assignment rate, as for each $u \in V$, we have $\sum_{e \ni u} \nu'_{u, e} = \eta_u - \mu_u + \sum_{e \ni u} \nu_{u, e} = \eta_u$. 
	
	Finally, for all $(u, e) \in G$, we have $|\nu_{u, e} - \nu'_{u, e}| \leq \|\eta - \mu\|_1 \leq \frac{\varepsilon_G}{3}$, implying
	\[
		|\nu'_e - \nu_e| = \left| \sum_{u \in e} \nu'_{u, e} - \nu_{u, e}\right| \leq \sum_{u \in e} |\nu'_{u, e} - \nu_{u, e}| \leq |e| \frac{\varepsilon_G}{3}.
	\]
	In particular, this implies $\frac{\nu_e}{|e|} \leq \frac{\nu'_e}{|e|} + \frac{\varepsilon_G}{3}$, thus for each $(v, m) \in U$, we have
	\begin{align*}
		\nu'_{v, m} + \Gamma_{v, m}
		& \leq \frac{\varepsilon_G}{3} + \nu_{v, m} + \Gamma_{v, m} \\
		& \leq \frac{\varepsilon_G}{3} - \varepsilon_G + \frac{1}{|e|} \left[ \nu_e + \Gamma_e\right] \\
		& \leq 2\frac{\varepsilon_G}{3} - \varepsilon_G + \frac{1}{|e|} \left[ \nu'_e + \Gamma_e\right] \\
		& \leq -\frac{\varepsilon_G}{3} + \frac{1}{|e|} \left[ \nu'_e + \Gamma_e\right] \\
	\end{align*}
	Therefore, $\nu'$ and $\gamma$ satisfy \eqref{eq:gencond} for $U$ and $\eta$. Since this argument holds for all $U$, $(G, \eta)$ satisfies Condition \eqref{eq:gencond}, hence $(G, \eta)$ is stabilisable.
	
	Going back to conservation equation, since $(G, \mu)$ and $(G, \eta)$ are both stabilisable, there exist vectors $\lambda_\mu$ and $\lambda_\eta$ such that $G_M \lambda_\mu = \mu$ and $G_M \lambda_\eta = \eta_V$. This implies 
	\[
		G_M (\lambda_\eta - \lambda_\mu) = \eta_V - \mu
	\]
	but since this is true for all $\|\eta - \mu\|_1 \leq \frac{\varepsilon_G}{3}$, we conclude that $G_M$ is surjective.
	
	Next, $G_M \lambda = \mu$ having a \textit{non-negative} solution $\lambda_0 \in \mathbb{R}^M_{\geq 0}$ is simply the conservation equation which holds when $(G, \mu)$ is stabilisable. However, to construct a \textit{positive} solution $\lambda \in \mathbb{R}^M_{> 0}$, we need an extra argument.
	
	See $G$ now as a subset of $V \times E$ and write $|G| = |\{(u, e) \mid (u, e) \in G\}|$. Let $0 < \varepsilon < \frac{1}{|G|} \min \left(\varepsilon_\nu, \frac{\varepsilon_G}{3}\right)$, and $\mu_\varepsilon \in \mathbb{R}^V$ be defined by $(\mu_\varepsilon)_u = \mu_u - \sum_{e \ni u} \varepsilon$. We have that $\| \mu_\varepsilon - \mu\|_1 = \varepsilon |G| < \min \left(\varepsilon_\nu, \frac{\varepsilon_G}{3}\right)$ by construction, so by the argument above, we have $(G, \mu_\varepsilon)$ is stabilisable. Thus, it satisfies the conservation equation, meaning there exists $\lambda_\varepsilon \in \mathbb{R}^E_{\geq 0}$ such that $G \lambda_\varepsilon = \mu_\varepsilon$.

	Let $\lambda = \lambda_\varepsilon + \varepsilon \mathbbm{1}$, then by construction, we have $G\lambda = \mu$, and moreover, for all $e \in E$, we also have $\lambda_e = \lambda_\varepsilon + \varepsilon \geq \varepsilon > 0$, as desired.
\end{proof}

\subsection{Sufficiency of \eqref{eq:inccond}}

Recall that we have remarked in Subsection \ref{subsection: contributions and organisation}, that the framework set out for the unweighted case does not directly generalised to the weighted case precisely because of the negative weights. In particular, this condition becomes crucial for the step where we want to show that Condition \eqref{eq:inccond} implies Condition \eqref{eq:onlcond}. But now as we have positive weights, this is no longer a problem.

\begin{lemma}
	\label{lemma: sufficiency of inccond}
	Condition \eqref{eq:inccond} implies Condition \eqref{eq:onlcond}. In particular, $(G, \mu)$ is stabilisable.
\end{lemma}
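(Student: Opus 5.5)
The plan is to build, for each support $U$, an explicit assignment rate from a positive solution $\lambda$ of $G\lambda=\mu$, and then perturb it using surjectivity of $G$ so that every inequality in Condition \eqref{eq:onlcond} becomes strict. Stabilisability then follows from Theorem \ref{theorem: necessity and sufficiency of onlcond}.

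\textbf{Base rate.} Let $\lambda\in\R_{>0}^E$ satisfy $G\lambda=\mu$, and set $\nu_{v,e}=e_v\lambda_e$ for $(v,e)\in G$. Since $\sum_{e\ni v}\nu_{v,e}=(G\lambda)_v=\mu_v$, this is an assignment rate. Moreover $\nu_{v,e}>0$ for every $(v,e)\in G$, because $e_v>0$ and $\lambda_e>0$. We have $\nu_e=\sum_{u\in e}e_u\lambda_e=|e|\lambda_e$, so $\frac{\nu_{v,e}}{e_v}=\lambda_e=\frac{\nu_e}{|e|}$ for every pair. Thus the inequalities of Condition \eqref{eq:onlcond} hold with equality, and we only need a small perturbation that makes them strict on $U$.

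\textbf{Perturbation.} Fix a support $U$. By Definition \ref{definition: support}, for each $e\in E$ we may choose a vertex $w_e\in e$ with $(w_e,e)\notin U$. For $t\in\R^E$ define $\delta_{v,e}=e_vt_e$ for $v\neq w_e$, and $\delta_{w_e,e}=e_{w_e}t_e+1$. Then for $(v,e)\in U$ (so $v\neq w_e$) we have $\frac{\delta_{v,e}}{e_v}=t_e$ and $\delta_e=|e|t_e+1$, hence
\[
	\frac{\delta_{v,e}}{e_v}-\frac{\delta_e}{|e|}=-\frac{1}{|e|}<0.
\]
For $\nu+\varepsilon\delta$ to remain an assignment rate, we need $\sum_{e\ni v}\delta_{v,e}=0$ for every $v$, that is
\[
	(Gt)_v=-b_v,\qquad b_v=|\{e\in E\mid w_e=v\}|.
\]
This is exactly where surjectivity of $G$ is used: it supplies some $t$ solving $Gt=-b$.

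\textbf{Conclusion.} Set $\nu^U=\nu+\varepsilon\delta$. Since all $\nu_{v,e}>0$, taking $\varepsilon>0$ small keeps $\nu^U\geq 0$, so $\nu^U$ is an assignment rate. By linearity, for each $(v,e)\in U$,
\[
	\frac{\nu^U_{v,e}}{e_v}-\frac{\nu^U_e}{|e|}=0-\frac{\varepsilon}{|e|}<0.
\]
Doing this for every support $U$ gives Condition \eqref{eq:onlcond}. Theorem \ref{theorem: necessity and sufficiency of onlcond} then shows $(G,\mu)$ is stabilisable, indeed by an online assignment policy.

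The only delicate step is the perturbation: it must fix the row sums while changing the $e$-slice differences in the right direction, and the choice of $w_e\notin U_e$ together with surjectivity handles this. Positivity of $\lambda$ is what lets the perturbation stay nonnegative. Everything else is direct computation.
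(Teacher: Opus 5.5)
Your proof is correct and follows the paper's route: start from the equality case $\nu_{v,e}=e_v\lambda_e$, add a perturbation that makes the inequalities strict on $U$, restore the row sums with a correction of the form $e_v t_e$ obtained from surjectivity of $G$, and use $\lambda>0$ to keep everything nonnegative for small $\varepsilon$. The only difference is cosmetic. The paper lowers $\nu_{u,e}/e_u$ by $\varepsilon$ on every pair of $U$ and then needs $\sum_{v\in U_e}e_v<|e|$, whereas you raise a single witness entry $(w_e,e)\notin U$, which gives the gap $-\varepsilon/|e|$ directly and keeps the row-sum bookkeeping cleaner.
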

\begin{proof}
	As in the unweighted case, our starting point is a solution $\lambda \in \R^E_{> 0}$ to the equation $G\lambda = \mu$, which gives us an assignment rate for which the inequality constraint in \eqref{eq:onlcond} is an equality. To tip it to be an inequality, we add a perturbation $\zeta$; this however causes the conservation constraint to fail, so to correct this, we add another term of perturbation, whose construction uses the surjectivity of $G$.
	
	Let $U$ be a support. For some $\varepsilon > 0$ to be defined later, let
	\[
	\zeta_{u, e} = \begin{cases}
		\varepsilon & \text{ if } (u, e) \in U \\
		0 & \text{ otherwise}
	\end{cases}.
	\]
	This induces a vector $\eta \in \mathbb{R}^V$ by $\eta_u = \sum_{e \ni u} \zeta_{u, e}$. Since $G$ is surjective, there exists $\xi \in \mathbb{R}^E$ such that $G \xi = \eta$, or in other words, for all $u \in V$, $\eta_u = \sum_{e \ni u} e_u \xi_e$.
	
	Now for $(u, e) \in G$, let $\nu_{u, e} = e_u \left(\lambda_e - \zeta_{u, e} + \xi_e\right)$, we will show that for suitably chosen $\varepsilon$, $\nu$ is an assignment rate satisfying \eqref{eq:onlcond} for $U$.
	\begin{itemize}
		\item By construction, for all $u \in V$, we have
		\[
		\begin{aligned}
			\sum_{e \ni u} \nu_{u, e}
			& = \sum_{e \ni u} e_u \left(\lambda_e + \zeta_{u, e} - \xi_e\right) \\
			& = \sum_{e \ni u} e_u\lambda_e + \sum_{e \ni u} e_u\zeta_{u, e} - \sum_{e \ni u} e_u\xi_e\\
			& = \mu_u + \eta_u - \eta_u = \mu_u;
		\end{aligned} 
		\]
		\item Moreover, we have $-\zeta_{u, e} < -\frac{1}{|e|} \zeta_e$ where $\zeta_e = \sum_{v \in e} e_v\zeta_{v, e}$. This is because of the definition of support, where for all $e$, there exists $v' \in e$ such that $(v', e) \not\in U$, so $\zeta_e \leq (|e|-\min_{v \in e} e_v) \delta$.
		
		In particular, this implies
		\begin{align*}
			\frac{\nu_{u, e}}{e_u}
			& = \lambda_e - \zeta_{u, e} + \xi_e = -\zeta_{u, e} + \frac{1}{|e|} \sum_{v \in e} e_v \left(\lambda_e + \xi_e\right) \\
			& < -\frac{1}{|e|} \sum_{v \in e} e_v \zeta_{v, e} + \frac{1}{|e|} \sum_{v \in e} e_v \left(\lambda_e + \xi_e\right) = \frac{1}{|e|} \sum_{u \in e} e_v (\lambda_e - \zeta_{v, e} + \xi_e) \\
			& = \frac{1}{|e|} \sum_{v \in e} \nu_{v, e} = \frac{\nu_e}{|e|};
		\end{align*}
		\item Finally, note that $\|\eta\|_1 = |U|\varepsilon$. Since $A$ is a linear map, it is continuous, hence, for $\varepsilon > 0$ small enough, we have that $\|\xi\|_\infty \leq -\varepsilon + \min_{(u, e) \in G} \lambda_e$. This implies that for all $(u, e) \in G$.
		\[
			\nu_{u, e} = \lambda_e - \zeta_{u, e} + \xi_e \geq \lambda_e - \varepsilon - \|\xi\|_\infty \geq 0.
		\]
	\end{itemize}
	Thus $\nu$ satisfies Condition \eqref{eq:onlcond} for the support $U$. Since this holds for all supports $U$, $(G, \mu)$ satisfies Condition \eqref{eq:onlcond}.
\end{proof}

Combining the two lemmata with the fact that \eqref{eq:onlcond} is a special case of \eqref{eq:gencond}, we get that 
\begin{theorem}
	\label{theorem: necessity and sufficiency of inccond}
	Let $G \in \R_{\geq 0}^{V\times E}$. Condition \eqref{eq:onlcond}, \eqref{eq:gencond}, and \eqref{eq:inccond} are equivalent.
	
	In particular, the stationary size-based randomised online assignment policy constructed in the proof of Lemma \ref{lemma: sufficiency of onlcond} and the online assignment version of $L_+$-MaxWeight admits maximum stability region.
\end{theorem}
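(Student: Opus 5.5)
The theorem follows by closing a cycle of implications among the three conditions, using only lemmas already established in this section and the previous two (all under the standing assumption $e_v > 0$ for $(v,e) \in G$). The chain is
\[
\eqref{eq:onlcond} \;\Longrightarrow\; \eqref{eq:gencond} \;\Longrightarrow\; \eqref{eq:inccond} \;\Longrightarrow\; \eqref{eq:onlcond}.
\]

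\begin{itemize}
\item \textbf{First implication.} Condition \eqref{eq:onlcond} is the special case of \eqref{eq:gencond} with no reassignment. Given the assignment rate $\nu^U$ from \eqref{eq:onlcond}, take $\gamma^U$ to be the Dirac mass at the reassignment $k_{v,e,e} = x_{v,e}$ and $k_{v,e,e'} = 0$ for $e' \neq e$. This reassignment is adapted to any buffer state, and it gives $\Gamma^U_{v,e} = 0$ and $\Gamma^U_e = 0$, so the inequality in \eqref{eq:gencond} reduces exactly to the one in \eqref{eq:onlcond}.

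\item \textbf{Second implication.} This is Lemma \ref{lemma: necessity of incccond}.

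\item \textbf{Third implication.} This is Lemma \ref{lemma: sufficiency of inccond}.
\end{itemize}

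The step most likely to need care is the trivial inclusion, since reassignment rates were defined relative to a buffer state $x$. I would check that the identity reassignment is adapted to every $x$, so the pair $(\nu^U, \gamma^U)$ is legitimate for each support $U$ regardless of which buffer realises that support. One could also try to avoid this detour: Lemma \ref{lemma: necessity of onlcond} and Lemma \ref{lemma: necessity of gencond} together with Theorem \ref{theorem: necessity and sufficiency of onlcond} give the same chain through stabilisability. Concretely, \eqref{eq:onlcond} implies stabilisability by an online policy, which is in particular a policy, which implies \eqref{eq:gencond} by Lemma \ref{lemma: necessity of gencond}.

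For the ``in particular'' clause, suppose $(G,\mu)$ is stabilisable by any policy at all. By Lemma \ref{lemma: necessity of gencond}, \eqref{eq:gencond} holds, hence \eqref{eq:onlcond} holds by the equivalence. Lemma \ref{lemma: sufficiency of onlcond} then shows that the stationary size-based randomised online assignment policy constructed there stabilises $(G,\mu)$, and so does the online assignment variant of $L_+$-MaxWeight. Both policies therefore stabilise every $\mu$ that any policy can stabilise, which is exactly the claim that they admit maximum stability region.
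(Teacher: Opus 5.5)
Your proposal is correct and matches the paper's proof. The paper closes the same cycle: \eqref{eq:onlcond} is the special case of \eqref{eq:gencond} with no reassignment, and the other two implications are Lemma \ref{lemma: necessity of incccond} and Lemma \ref{lemma: sufficiency of inccond}. Your explicit identity reassignment (the Dirac mass at $k_{v,e,e} = x_{v,e}$, which gives $\Gamma^U = 0$) and your derivation of the maximal-stability clause via Lemma \ref{lemma: necessity of gencond} and Lemma \ref{lemma: sufficiency of onlcond} simply make explicit what the paper leaves implicit.
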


\begin{remark}
	It is clear that $L_+$-MaxWeight is implementable in polynomial time, but the question of implementing the online assignment policy defined in the proof of Lemma \ref{lemma: necessity of onlcond} has not been addressed.
	
	One can see that given a support, the problem of finding some assignment rate optimising the drift of $L_+$ is essentially a linear programming problem, which is solvable in weakly polynomial time. However, this can be improved: from a solution $\lambda$ to the conservation equation $G \lambda = \mu$, it suffices to identify $\varepsilon > 0$ sufficiently small, at which point the proof of Lemma \ref{lemma: sufficiency of inccond} gives an explicit construction of a desired assignment rate for a given support.
	
	Morever, as this construction works for all $\varepsilon > 0$ sufficiently small, we in fact obtain a family of Markovian, size-based online assignment policies admitting maximum stability region. 
\end{remark}
	\section{Generalised matchings}
\label{section: generalised matchings}

Now that we have established necessary and sufficient criteria for stability in the case of non-negative weights, let us tackle the case of general weights. To this, we introduce the notion of generalised matching types.

\subsection{Motivation and definition}

As mentioned in Subsection \ref{subsection: contributions and organisation}, the main difficulty in adapting the framework of the unweighted case to the general-weight case is the fact that there is a key step in the proofs \cite[Lemma 5.2]{Nguyen2026} where we require that $e_u > 0$ for all $(u, e) \in G$.

To see where this gap comes from, let us look at what happens: a type-$e$ matching can generate new items, say of class $v$, but these items cannot stay in the buffer indefinitely if the system is stable. As such, they must participate in other matchings, and eventually leave the system.

This is where the previous framework shows its inadequacy: in the classical setting, we have shown that there is an online assignment policy that is maximally stable \cite[Remark after Theorem 5.1]{Nguyen2026}. However, such policy fixes the matching type of \textit{all} items involved in the matching. In particular, it means that if we were to use the same policy for general weighted case above, the newly created class-$v$ items would be assigned to type-$e$ matchings by default, to which they could not participate because $e_v < 0$ by definition, and they could not be matched by any other matching types either. Hence, they would accumulate, and the system would be unstable. This shows that the weakness is fundamental, of which the would-be gap in the proofs is only a manifestation.

As such, a more general framework is necessary. To come back to the example above, the insight is that these newly created items will leave the buffer, meaning that they participate in some other matchings. Eventually, the system is stable, so \textit{all} newly created items will leave the buffer, via some combinations of matchings.

To make this intuition rigorous, we define generalised matching types, which are essentially $\N$-linear combinations of matching types.

\begin{definition}
	A generalised matching $m$ is a sequence $(m_{v, n})_{v \in V, n \in \N}$ in $K_V^\N$ such that for some $\N$-linear combination of hyperedges $\sum_{e \in E} m_e e$ and for all $v \in V$, we have $\sum_{n = 0}^\infty m_{v, n} = \sum_{e \in E} m_e e_v$.
	
	When the exact choice of item is not required, we ignore the index $n$, in which case we say that a generalised \emph{matching type} $m$ is a $\N$-linear combination of hyperedges $\sum_{e \in E} m_e e$. We write $m_v = \sum_{e \in E} m_e e_v$, and $|m| = \sum_{v \in V} m_v$.
	
	Moreover, let $B = (b_{v, n}) \in K_V^\N$ be a buffer state, then a generalised matching $m$ is admissible if for all $v \in V$ and $n \in \N$, we have $m_{v, n} \leq b_{v, n}$. If we choose to realise this matching, then the resulting buffer state $C = (c_{v, n})$ is given by $c_{v, n} = b_{v, n} - m_{v, n}$.
\end{definition}

From now on, for the sake of brevity, we omit the adjective ``generalised'' whenever the context is clear. Now we say that a generalised matching is stabilising if it does not increase the content of the buffer for all vertices.

\begin{definition}
	A generalised matching type $m = (m_{v, n})_{v \in V, n \in N}$ is said to be stabilising if for all $v \in V$, we have $m_v \geq 0$. We say that $m$ uses class $v$, and write $v \in m$, when $m_v > 0$.
\end{definition}

\subsection{Existence}

The first question is whether for every vertex $v$, there are stabilising generalised matchings which use class-$v$ items, meaning $m_v > 0$. The answer is no: consider the closed system with some initial items but there are no external arriving items, then the system is obviously stabilisable, but there are no stabilising matchings that actually use any items, since the total number of items in the system is fixed.

However, this is an exception. We rather need stabilising matchings $m$ using class-$v$ items when there can (and will) be external arriving class-$v$ items, so the question now is whether for all such vertex $v$, there are stabilising generalised matchings which use class-$v$ items. Fortunately, the answer is yes.

\begin{lemma}
	\label{lemma: existence of generalised matching types}
	Suppose $(G, \mu)$ is stabilisable, then for all $v \in V$ such that $\mu_v > 0$, there exist stabilising generalised matchings $m$ such that $m_v > 0$.
\end{lemma}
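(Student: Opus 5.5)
The plan is to extract, from a stabilising policy, a long-run flow balance. Because the buffer keeps returning to a bounded set, what comes in must on average be matched out. From this balance we read off a nonnegative combination of hyperedges that uses class $v$, and then round it to an $\N$-linear combination.

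Fix a stabilising policy $\Phi$ and the compact set $C$ from the definition of stability. Let $N_e(t)$ be the number of type-$e$ matchings realised up to time $t$. Conservation of items gives, for each $u \in V$,
\[
	Z_u(t) = Z_u(0) + \sum_{s \leq t} (A_s)_u - \sum_{e \in E} e_u N_e(t).
\]
Since $\E[\tau_C] < \infty$ from every starting time, the buffer returns to the bounded set $C$ along an increasing sequence of (random) times $t_j \to \infty$. Along these times $Z(t_j)/t_j \to 0$. By the strong law of large numbers, $\frac{1}{t_j}\sum_{s \leq t_j} A_s \to \mu$. Hence
\[
	\sum_{e} e_u \frac{N_e(t_j)}{t_j} \to \mu_u \quad \text{for all } u.
\]

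Next I show the empirical rates $N(t_j)/t_j$ stay bounded, so that a limit point $\lambda \in \R^E_{\geq 0}$ exists with $G\lambda = \mu$ componentwise. This is the main obstacle. With negative weights, nothing a priori prevents matching counts from blowing up along a cycle that creates and consumes items. I would first try to sidestep boundedness altogether. Normalise by $\max(1, \|N(t_j)\|_1/t_j)$ and pass to a limit. Either the rates are bounded, and we get $G\lambda = \mu$ with $\lambda \geq 0$. Or we get $\lambda \geq 0$, $\lambda \neq 0$ with $G\lambda = 0$.

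The second case is the less useful one, so I would avoid it as follows. Write $\lambda = \lambda^{(1)} + \lambda^{(2)}$ by separating the normalised contributions. Alternatively, apply Farkas's lemma directly to the finite system
\[
	G\lambda \geq 0, \quad (G\lambda)_v > 0, \quad \lambda \geq 0 .
\]
If this system has no solution, Farkas gives $y \geq 0$ with $y_v > 0$ and $y^\top G \leq 0$. The latter says every hyperedge consumes nonpositive $y$-weighted content, i.e.\ no matching decreases $\sum_u y_u Z_u$. Then
\[
	\sum_u y_u Z_u(t) \geq \sum_u y_u Z_u(0) + \sum_{s \leq t} y^\top A_s,
\]
which has drift at least $y_v \mu_v > 0$ and so grows linearly almost surely. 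That contradicts the recurrence to the compact set $C$. I find this route cleaner, since it needs neither boundedness nor limit extraction. It gives a real vector $\lambda \geq 0$ with $G\lambda \geq 0$ and $(G\lambda)_v > 0$.

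Finally I turn $\lambda$ into an $\N$-linear combination. The feasible set is defined by rational-free linear inequalities in the finite-dimensional cone $\{\lambda \geq 0 : G\lambda \geq 0\}$, and the entries of $G$ may be irrational. Because of this, I would not use rationality. Instead I use strictness and openness. The Farkas step can be strengthened to give $\lambda$ with $G\lambda \geq 0$ and $(G\lambda)_v > 0$. The remaining constraints $(G\lambda)_u \geq 0$ are not strict, so perturbation alone may break them.

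To handle this, observe that the cone $P = \{\lambda \geq 0 : G\lambda \geq 0\}$ is polyhedral. Scaling preserves it, so I only need to land exactly on an integer point in $P$ with $(G\lambda)_v > 0$. If $G$ has irrational entries this can genuinely fail. In that case I would fall back on the definition's intent and restrict attention to the integer structure available: the matchings actually realised, $N(t_j) \in \N^E$, themselves form $\N$-combinations. Using the displayed conservation identity at two return times $t_j < t_{j'}$, the increment $m = N(t_{j'}) - N(t_j) \in \N^E$ satisfies
\[
	G m = \sum_{t_j < s \leq t_{j'}} A_s - \bigl(Z(t_{j'}) - Z(t_j)\bigr).
\]
With this expression the claim reduces to a sign check, $m_u \geq 0$ for all $u$ and $m_v > 0$. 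Once the window is long, the arrivals term $\sum A_s$ dominates the bounded term $Z(t_{j'}) - Z(t_j)$, which makes $m_v > 0$ since $\mu_v > 0$. The delicate point is classes $u$ with $\mu_u = 0$, where the arrivals term gives no margin. Ensuring $m_u \geq 0$ there, by choosing return times at which those coordinates do not decrease, is the step I expect to need the most care.
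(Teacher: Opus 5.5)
The gap is the step you flag yourself: producing $m\in\N^E$ with $(Gm)_u\ge 0$ for the classes with $\mu_u=0$. Your Farkas argument is sound, but it only gives a real $\lambda\ge 0$ with $G\lambda\ge 0$ and $(G\lambda)_v>0$. Your fallback, choosing return times $t_j<t_{j'}$ with $Z_u(t_{j'})\le Z_u(t_j)$ on those coordinates, has no mechanism behind it, since a bounded sequence may increase forever.

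For a fixed initial buffer this step cannot work in general, because the statement itself then fails for irrational weights. Take $V=\{a,b,c\}$ and $E=\{e_1,e_2\}$ with $G_{a,e_1}=1$, $G_{b,e_1}=-\sqrt2$, $G_{c,e_1}=\sqrt2$, $G_{b,e_2}=1$, $G_{c,e_2}=-1$ (other entries $0$). Let one unit of class $a$ arrive per step, and let $Z(0)=(0,0,3)$ with contents in $\R_{\ge 0}$. Since $Z_b+Z_c\equiv 3$, consider the policy that at each step realises $e_1$ once and then $e_2$ as long as $Z_b\ge 1$. It keeps $Z_b<1$ and $Z_c>2>\sqrt2$, so $e_1$ is always feasible and $Z$ never leaves $\{0\}\times[0,1]\times[2,3]$. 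Yet $m=x_1e_1+x_2e_2$ with $x\in\N^2$ has $m_b=x_2-\sqrt2\,x_1=-m_c$, so it is stabilising only if $x=0$. Hence no stabilising $m$ has $m_a>0$. Every realised increment $N(t_{j'})-N(t_j)$ is such an $x$, so no choice of return times can help. The paper's own proof combines the matchings realised between $T$ and $T+\tau$, exactly as in your fallback, and has the same hole: it checks $m_v>0$ only for $\mu_v>0$ and never checks $m_u\ge 0$ for $\mu_u=0$.

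Two things do close the step. If $G$ is rational, the polyhedron $\{\lambda\ge 0: G\lambda\ge 0,\ (G\lambda)_v\ge 1\}$ is nonempty by your Farkas step and pointed, so it has a rational vertex, which you scale into $\N^E$. Equivalently, your fallback then works by pigeonhole. For $\mu_u=0$ the coordinate $Z_u$ moves in $Z_u(0)+\frac1D\Z$, with $D$ a common denominator of the weights, and it is bounded at return times. So infinitely many far-apart return times share the same values on all such coordinates.

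Without rationality, you need stability from every initial buffer with a single compact set $C$, which is what the paper uses in the proof of Lemma~\ref{lemma: necessity of onlcond}. Start from $Z(0)$ with $Z_u(0)>\max_{x\in C}x_u$ for every $u$, and let $m$ be the sum of all matchings realised before the first entrance $\tau$ into $C$. Then $m_u=Z_u(0)+\sum_{s\le\tau}(A_s)_u-Z_u(\tau)>0$ for every $u$, with no SLLN or Farkas needed. This reading, however, contradicts the paper's remark that a closed system is ``obviously stabilisable'', which is precisely why the lemma restricts to classes with $\mu_v>0$.
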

\begin{proof}
	Let $V' = \{ v \mid \mu_v > 0\}$. As $C$ is compact, it is bounded, so we can choose $M$ such that $M > x_v$ for all $x \in C$ and $v \in V'$.
	
	By definition, with probability $1$, for some $T$ large enough, we have that for all $v \in V'$, the total number of class-$v$ items arriving up to time $T$ is at least $M$. But if $(G, \mu)$ is stabilisable, we have that $\E[\tau \mid \mathcal{H}_T] < \infty$, where
	\[
		\tau = \min \{t \geq 0 \mid Z(t + T) \in C \}.
	\]
	So at time $T + \tau$, we have that $Z_v(T + \tau) < M$ by definition of $M$; we also have that $Z_v(T) \geq M$. This means that between $T$ and $T + \tau$, some simple matchings occurs. Combining all such matchings as a generalised matching $m$, we have that $m_v = Z_v(T) - Z_v(T + \tau) > 0$ for all $v \in V'$, as desired.
\end{proof}

\subsection{Generators}
\label{subsection: generators}

We can now consider the set of stabilising generalised matchings $M = \{ m \mid m_v \geq 0\}$. It is tempting to replace $E$ with $M$, and in particular, we can define a new hypergraph $G_M = (V, E_M)$ where for every stabilising matching $m \in M$, we associate a weighted hyperedge $e_m = \{v \mid m_v > 0\}$. It is clear that $(G, \mu)$ is stabilisable if and only if $(G_M, \mu)$ is also stabilisable, so this would bring us back to the classical setting, and indeed this is what we will do.

But before that, there is one subtlety: unlike $E$, $M$ is \textit{not} finite, for any integer multiple of a stabilising matching is also stabilising. In this case, we would end up with an infinite hypergraph, which is not the classical setting and for which the previous framework might not be suitable.

To this problem, we first investigate the structure of $M$. If we see each hyperedge $e$ as a column vector in the matching matrix, then by definition, one can also identify $M$ with the set
\[
	\left\{x \in \N^E \Bigg\vert \sum_{e \in E} x_e e \geq 0\right\} = \left\{x \in \R^E \Bigg\vert \sum_{e \in E} x_e e \geq 0\right\} \cap \R^E_{\geq 0} \cap \Z^E.
\]

The first two sets, $\left\{x \mid \sum_{e \in E} x_e e \geq 0\right\}$ and $\R^E_{\geq 0}$, are convex cones, thus so is their intersection. We then see that $M$ is the set of integer points in a convex cone.

If $G$ has rational weights, then this cone is a rational cone. By Gordan's lemma, $M$ is finitely generated. Since we may consider the stabilising matchings up to multiplicities, we can take a finite set of generators $E'$ for $M$ and define the hypergraph $G' = (G, E')$, for which the previous framework applies.

However, if $G$ has irrational weights, this remedy is not possible, and there are cases where $M$ is not finitely generated. The hypergraph $G_M$ is now truly infinite, to which the framework established in the previous Sections does not apply. A way to overcome this obstacle is the topic of next Section. 
	\section{General-weight matchings}
\label{section: general weights}

In the previous Section, we have reduced the stability of $(G, \mu)$ to that of $(G_M, \mu)$. Before, when $G$ has non-negative weights, we have assumed that $\mu_v > 0$ for all $v \in V$, with the reason being that if $\mu_v = 0$, then there is (almost surely) no class-$v$ items arriving. A matching type $e$ thus either does not use class-$v$ items, or it does, in which case eventually it will no longer be feasible and thus can be safely ignored.

When $G$ is of arbitrary weights, it may occur that even when $\mu_v = 0$, whilst some matching types \emph{use} class-$v$ items, other matching types \emph{generate} them. In view of a generalised stabilising matching type $m$, by the same reasoning as in the previous paragraph, we must have $m_v = 0$, so the number of class-$v$ items in the system is essentially conserved.

However, it may also happen that while $m_v = 0$, some constituent matching types $e$ use class-$v$ items, which may not be feasible (and thus also hinder $m$) if there is not enough of them in the first place. We assume hereinafter that this is not the case: namely, for all $v \in V$ such that $\mu_v = 0$, we have $Z_v(0)$ sufficiently large so that all stabilising generalised matching types are not hindered by the lack of class-$v$ items.

As we have remarked, $M$ is infinite, and a priori we might need to use an infinite subset of $M$ in order to stabilise $(G, \mu)$. However, fortunately, it turns out that we do not have to.

Consider the set $C_V$ of arrival rate $(\mu_v)_{v \in V}$ such that $(G, \mu)$ is stabilisable. Within $C_V$, we consider a subset $F_V$, the set of arrival rates $\mu$ such that $(G, \mu)$ is stabilisable using finitely many stabilising generalised matching types.

The first fact about $F_V$, however, is rather intriguing.
\begin{lemma}
	\label{lemma: density of F_V}
	$F_V$ is dense in $C_V$.
\end{lemma}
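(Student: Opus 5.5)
We plan to show that every $\mu \in C_V$ is a limit of arrival rates in $F_V$. The key is to characterise $C_V$ via Condition \eqref{eq:inccond} applied to the (infinite) hypergraph $G_M$, and then to exploit the fact that only finitely many columns are needed to witness a positive solution and surjectivity.

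First, fix $\mu \in C_V$. Since $(G,\mu)$ is stabilisable, so is $(G_M,\mu)$. The necessity argument of Lemma \ref{lemma: necessity of incccond} uses only finitely many matchings at a time, so it should go through: stabilisability gives a non-negative solution of the conservation equation. Concretely, we would run the argument of Lemma \ref{lemma: existence of generalised matching types} over long time windows. There the realised matchings between $T$ and $T+\tau$ combine into a single generalised matching $m$ with $m_v = Z_v(T)-Z_v(T+\tau)$. Dividing by the elapsed time and using the law of large numbers for arrivals, we expect to obtain, for every $\delta>0$, a finite nonnegative combination $\sum_{m \in M_\delta} \lambda_m m$ whose vector $(\sum_m \lambda_m m_v)_v$ is within $\delta$ of $\mu$. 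Here $M_\delta \subset M$ is finite, because any realisation over a finite window uses finitely many matchings.

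Next, we enlarge $M_\delta$ to a finite set $M'_\delta$ so that the restricted matrix $G_{M'_\delta}$ (columns $(m_v)_{v}$) is surjective onto $\R^{V'}$, with $V'=\{v \mid \mu_v>0\}$. The coordinates with $\mu_v=0$ are handled by the standing assumption that $Z_v(0)$ is large, so conserved classes are ignored. Surjectivity of the full $G_M$ should follow from openness of $C_V$, by the same perturbation argument as in Lemma \ref{lemma: necessity of incccond}. A spanning family is then given by finitely many columns, since $\R^{V'}$ is finite-dimensional. Adding these columns with small positive coefficients, and correcting via surjectivity exactly as in the proof of Lemma \ref{lemma: sufficiency of inccond}, we obtain a rate $\mu^\delta$ that satisfies three properties. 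First, $\|\mu^\delta-\mu\|_1 = O(\delta)$. Second, $\mu^\delta$ admits a strictly positive solution of $G_{M'_\delta}\lambda=\mu^\delta$. Third, $G_{M'_\delta}$ is surjective.

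Finally, $G_{M'_\delta}$ is a finite hypergraph with non-negative weights, namely stabilising generalised matching types. Theorem \ref{theorem: necessity and sufficiency of inccond} therefore applies and shows that $(G_{M'_\delta},\mu^\delta)$ is stabilisable. Each generalised matching is a finite combination of hyperedges and hence realisable in $G$, so $\mu^\delta \in F_V$. Letting $\delta\to 0$ gives density.

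The main obstacle is the first step: extracting from an arbitrary stabilising policy, which may be non-Markovian and may use infinitely many distinct generalised matchings over time, a finite combination approximating $\mu$ with controlled error. We would first try a renewal/ergodic argument. We would take successive return times to the compact set $C$, whose expected lengths are finite, and over $N$ cycles bound the discrepancy between arrivals and matched quantities by $\max_{x\in C}\|x\|_1$, which vanishes after dividing by the total time. If this fails, we would fall back to a Farkas/separation argument in the style of Lemma \ref{lemma: necessity of onlcond}: assume $\mu$ is not in the closure of the cone generated by $M$, and build a linear Lyapunov function with nonnegative drift, which contradicts returns to $C$.
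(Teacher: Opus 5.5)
Your overall structure matches the paper's. You approximate $\mu$ by a finite nonnegative combination of stabilising generalised matchings, pad it with finitely many extra columns so the resulting finite non-negative hypergraph satisfies Condition \eqref{eq:inccond}, and conclude with Theorem \ref{theorem: necessity and sufficiency of inccond}.

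\textbf{How you get the finite combination.} The paper posits long-run rates $\lambda_m$ for every $m\in M$ with $G_M\lambda=\mu$ and truncates the tail of $\sum_m\lambda_m|m|$. You instead read the combination off the matchings realised over a long window. Your route is sounder. For a non-Markovian policy such per-type rates need not exist, and splitting realised simple matchings into generalised types is not canonical. Your route needs only the strong law for arrivals and boundedness of $C$.

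It is also simpler than you fear, with no renewal theory or separation fallback needed. Take $Z(0)$ dominating $C$ componentwise and let $t_k\to\infty$ be return times to $C$. The realised count vector $n(t_k)\in\N^E$ then satisfies $Gn(t_k)=Z(0)+S(t_k)-Z(t_k)\ge 0$, so $n(t_k)\in M$. Moreover $Gn(t_k)/t_k\to\mu$ almost surely, so a single generalised matching suffices.

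The identity $m_v=Z_v(T)-Z_v(T+\tau)$ you copied from Lemma \ref{lemma: existence of generalised matching types} omits the arrivals in the window. With it, the normalised vector tends to $0$, not $\mu$. You also secure a strictly positive solution by giving the padding columns small positive weight, a point the paper's proof skips.

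\textbf{The gap: surjectivity.} You derive it from openness of $C_V$ via the perturbation argument of Lemma \ref{lemma: necessity of incccond}. That argument perturbs the rates $(\nu^U,\gamma^U)$ supplied by Condition \eqref{eq:gencond}, which are only known to exist for a finite hypergraph with positive weights (Lemma \ref{lemma: necessity of gencond}). For general $G$, or for the infinite $G_M$, the uniform margin over infinitely many supports may vanish. There, openness of $C_V$ is only a consequence of the finite reduction this section is proving, so the appeal is circular. The paper's ``take $M'$ to contain a basis of $\R^E$'' is no better, since it presupposes that $M$ spans $\R^E$ and that $G$ is surjective.

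A direct argument closes the gap, using the same mechanism as $L_-$ in Lemma \ref{lemma: necessity of onlcond}. Suppose $w\neq 0$, supported on $V'$, satisfies $\langle w,Gm\rangle=0$ for all $m\in M$. Start from $Z(0)$ dominating $C$ and let $\tau$ be the hitting time of $C$. Then $n(\tau)\in M$, so $\langle w,Z(\tau)\rangle=\langle w,Z(0)\rangle+\langle w,S(\tau)\rangle$, and Wald's identity gives
\[
\langle w,Z(0)\rangle+\langle w,\mu\rangle\,\E[\tau]=\E\bigl[\langle w,Z(\tau)\rangle\bigr]\in[-c_w,c_w],\qquad c_w=\max_{x\in C}|\langle w,x\rangle| .
\]
There are two cases.
\begin{itemize}
\item If $\langle w,\mu\rangle\ge 0$ and some $w_v>0$, raise $Z_v(0)$ until $\langle w,Z(0)\rangle>c_w$.
\item Otherwise some $w_v<0$ and $\langle w,\mu\rangle\le 0$; choose $\langle w,Z(0)\rangle<-c_w$.
\end{itemize}
In both cases $\E[\tau]<\infty$ is contradicted. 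With this replacement, and the paper's standing assumption that generalised matchings are realisable in $G$, your proof goes through.
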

\begin{proof}
	Let us define $N$ as the formal linear map sending $x \in \R^M$ to the vector $\left(\sum_{m \in M} x_m m_e\right)_{e \in E}$ where $m_e$'s are the coefficients in a representation $m = \sum_{e \in E} m_e e$. It is entirely possible that this representation is not unique, in which case we pick one arbitrarily. Let us also write $G_M = G \circ N : \R^M \to \R^V$.
	
	Now let $\mu \in C_V \setminus F_V$, we have that $(G, \mu)$ is stabilisable, so in particular one can talk about $\lambda_m$ the average number of type-$m$ matchings during a given unit of time. This constitutes a vector $\lambda$, which b the conservation equation, satisfies $G_M \lambda = \mu$. In particular, this implies $\sum_{m \in M} \lambda_m |m| = \|\mu\|_1$.
	
	Let $\varepsilon > 0$, and take $M'$ a subset of $M$ such that $\sum_{m \in M\setminus M'} \lambda_m |m| \leq \varepsilon$. Let $\mu'_{\varepsilon, v} = \sum_{m \in M'} \sum_{e \in E} \lambda_m m_e e_v$, we have that 
	\[
		\mu_v - \mu'_{\varepsilon, v}  = \sum_{m \in M\setminus M'} \sum_{e \in E} \lambda_m m_e e_v \leq \max_{e \ni v} e_v \sum_{m \in M\setminus M'} \sum_{e \in E} \lambda_m m_e \leq \varepsilon\max_{e \ni v} e_v,
	\]
	so in particular, 
	\[
		\|\mu - \mu'_\varepsilon\|_1 \leq \varepsilon \sum_{v \in V} \max_{e \ni v} e_v,
	\]
	uniformly. As such, letting $\varepsilon$ tend to $0$, we have $\|\mu - \mu'\|_1 \rightarrow 0$.
	
	For $\left(G_{M'}, \mu'\right)$, the conservation is satisfies by construction, and we can always take $M'$ to contain a basis of $\R^E$ which is finite dimensional, so that the map $G_{M'}$ is surjective. Hence, $\left(G_{M'}, \mu'\right)$, and thus $\left(G, \mu'\right)$, is stabilisable using finitely many stabilising matching types, which means $\mu'$ is in $F_V$.
\end{proof}

The second fact about $F_N$ is no less non-trivial.

\begin{lemma}
	$F_V$ is closed.
\end{lemma}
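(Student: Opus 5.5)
Take $\mu \in \overline{F_V}$, the closure being taken inside $C_V$, with $\mu^{(n)} \to \mu$ and $\mu^{(n)} \in F_V$; we want $\mu \in F_V$. By the equivalence of criteria for non-negative weights (Theorem \ref{theorem: necessity and sufficiency of inccond}, applied to a finite hypergraph $G_{M'}$ of stabilising generalised matching types, all of whose weights $m_v$ are non-negative), $\mu \in F_V$ holds exactly when there is a finite $M' \subset M$ such that $G_{M'}$ is surjective and $G_{M'}\lambda = \mu$ has a positive solution. So the plan is to exhibit one fixed finite set $M'$ that works for $\mu$. I would obtain it from a Carathéodory-type reduction, as follows.

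First, $\mu \in C_V$. Then, as in the proof of Lemma \ref{lemma: density of F_V}, the conservation equation gives $\lambda \in \R^M_{\geq 0}$ with $G_M\lambda = \mu$. Moreover, the openness argument in the proof of Lemma \ref{lemma: necessity of incccond} (stabilisability is preserved under small perturbations of $\mu$) lets us assume $\mu$ lies in the interior of the cone $G_M(\R^M_{\geq 0})$. This cone is the image of the cone $P = \{x \in \R^E_{\geq 0} \mid Gx \geq 0\}$ under $G$. Since $P$ is a polyhedral cone defined by finitely many real linear inequalities, it has finitely many extreme rays $r_1, \dots, r_p$, and this is true even when the weights are irrational.

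The key step, and the one I expect to be the main obstacle, is that the generalised matching types are only the \emph{integer} points of $P$. With irrational weights, an extreme ray may contain no lattice point other than $0$. I would handle this by approximation. Since $\mu$ is interior, $\mu = \sum_i \alpha_i G r_i$, and we may take all $\alpha_i > 0$ after adding a small multiple of $\sum_i G r_i$ and rebalancing using interiority. Next, pick integer points $m^{(i)} \in P \cap \Z^E$ whose directions approximate the $r_i$ closely. Such points exist, with $m^{(i)}$ in the interior of $P$ (scaled up), as long as $P$ has nonempty interior; otherwise, restrict to the linear span of $P$ and use density of lattice directions in the rational subspace spanned by the integer points of $P$. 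The cone generated by $\{G m^{(i)}\}$ then approximates $G(P)$. Because $\mu$ is interior with some margin $\delta$, a fine enough approximation still contains $\mu$ in its interior.

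Finally, add to this set finitely many stabilising types whose images span $\R^V$, which is possible by surjectivity as in Lemma \ref{lemma: density of F_V}. This gives a finite $M'$ with $G_{M'}$ surjective and $\mu$ in the interior of $G_{M'}(\R^{M'}_{\geq 0})$, hence with a strictly positive solution $\lambda$. Condition \eqref{eq:inccond} then holds for $(G_{M'},\mu)$, so $\mu \in F_V$. Together with Lemma \ref{lemma: density of F_V}, this also yields $F_V = C_V$.
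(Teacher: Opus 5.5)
\textbf{The gap: the interiority step.} Your route differs from the paper's. The paper stays with the sequence $\mu_n$: it uses Carath\'eodory to cut each finite $M_n$ down to at most $|E|$ types and extends these to a basis so that $G_{M'_n}$ is full rank. It then makes the coefficients positive by the perturbation trick of Lemma~\ref{lemma: necessity of incccond}, and passes to the limit using monotonicity of $M_n$. You never use the sequence and instead try to prove $C_V\subseteq F_V$ outright from the polyhedral geometry of $P$. The problem is the sentence placing $\mu$ in the interior of the cone. The openness argument you cite works only for a finite hypergraph with positive weights, because it starts from Condition~\eqref{eq:gencond} and takes a minimum $\varepsilon_G$ over finitely many supports. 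Neither ingredient is available here. $G$ has negative weights, so Sections~\ref{section: online assignment policies}--\ref{section: linear algebraic criterion and equivalence of criteria} do not apply to it. $G_M$ is infinite, so there are infinitely many supports, the infimum defining $\varepsilon_G$ can be $0$, and even the necessity of \eqref{eq:gencond} for $G_M$ is unproved; the paper says explicitly that the earlier framework does not apply to $G_M$. This is not a technicality. Openness of $C_V$ is itself a consequence of $F_V=C_V$, since for each finite $M'$ Condition~\eqref{eq:inccond} defines an open set of $\mu$. Once interiority is granted, finiteness is routine, so the lattice approximation you flag as the main obstacle is not where the difficulty lies. Interiority is also false whenever some $\mu_v=0$, a case Section~\ref{section: general weights} explicitly allows, since every $Gm$ lies in $\R^V_{\geq 0}$.

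\textbf{How to repair it.} For $\mu\in\R^V_{>0}$ you can prove the step directly. If $\mu\notin\operatorname{int}G(P)$, choose $w\neq 0$ with $\langle w,y\rangle\leq 0$ on $G(P)$ and $\langle w,\mu\rangle\geq 0$. Farkas' lemma for $P=\{x\geq 0,\ Gx\geq 0\}$ gives $a\in\R^E_{\geq 0}$ and $b\in\R^V_{\geq 0}$ with $G^{\top}(w+b)=-a$. Then $L(z)=\langle w+b,z\rangle$ never decreases under any matching, and it has drift $\langle w+b,\mu\rangle\geq 0$ under arrivals. Because $\mu>0$, the vector $w+b$ has a positive coordinate. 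Wald's identity with a large initial state then contradicts the finite expected return to $C$, as in Lemma~\ref{lemma: necessity of onlcond}.

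You also need to correct the identification of $G_M(\R^M_{\geq 0})$ with $G(P)$, which is false in general: if $Gx\geq 0$ forces $x$ onto an irrational line, then $M=\{0\}$. What saves you is that $\operatorname{int}G(P)\neq\emptyset$ rules out implicit equalities $(Gx)_v=0$ in $P$. The span of $P$ is therefore a coordinate subspace, and integer directions are dense in $P$. Your appeal to ``the rational subspace spanned by the integer points of $P$'' is not enough on its own, since that subspace could miss extreme rays of $P$.

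With these repairs the argument goes through. It is even simpler to cover a small cross-polytope around $\mu$ by finitely many $Gm$. Unlike the paper's route, though, it gives no bound of $|E|$ on the number of types.
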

\begin{proof}
	Let $(\mu_n)_{n \in \N}$ be a sequence of arrival rates in $F_V$ converging to an arrival rate $\mu$ in $C_V$. For each $n$, $(G, \mu_n)$ is stabilisable using a finite set $M_n$ of stabilising matching types. Without loss of generality, we may assume that $(M_n)_{n \in \N}$ is a non-decreasing sequence.
	
	In fact, more can be said: once we know that for some $n$, $(G, \mu_n)$ is stabilisable with a finite subset $M_n \subseteq M$, let $\lambda_{n, m}$ be the average number of type-$m$ matchings during a given unit of time, we have that the vector $\lambda_n = \left(\lambda_{m, m}\right)_{m \in M_n}$ satisfies $G_{M_n} \lambda_n = \mu_n$. Let $\omega_n = N_n \lambda_n$ where $N_n$ is the formal linear map we defined at the beginning of Lemma \ref{lemma: density of F_V} but for $M_n$, we have that $\omega$ is a $\R_{\geq 0}$-linear combination of columns of $N_n$, which are in $\R^E$. By Carath\'{e}odory's theorem, there exists a subset $M'_n$ of $M_n$ of size at most $|E|$, such that $\omega$ can be written as a $\R_{\geq 0}$-linear combination the corresponding columns of $N'$.
	
	On the other hand, by Theorem \ref{theorem: necessity and sufficiency of inccond}, we have that the matrix $G_{M_n} = G \circ N_n$ is full-rank. As $G$ is map from $\R^E$ to $\R^V$, this implies $|E| \geq |V|$, and similarly, the surjectivity of $N_n$ implies $|M_n| \geq |E|$. From the subset $M'_n$ in the previous paragraph, we may extract a minimal generating set, then extend it to a subset of size $|E|$ such that the corresponding columns in $N_n$ form a basis of $\R^E$. The corresponding formal map $N'_n$ is now bijective, and the matrix $G_{M'_n}$ is thus full-rank.
	
	When we write $\omega_n$ as a $\R_{\geq 0}$-linear combination of columns of $N'_n$ corresponding to $M'_n$, it is possible that some coefficients are zero, but it suffices to repeat the argument at the end of Lemma \ref{lemma: necessity of incccond} to obtain strictly positive coefficients.
	
	By Theorem \ref{theorem: necessity and sufficiency of inccond}, $(G_{M'_n}, \mu)$ is stabilisable. Finally, since $(M_n)_{n \in \N}$ is a non-decreasing sequence, we have $M'_n \subset M_{n'}$ for all $n' \geq n$, so in fact this construction is well-defined at the limit, and we may construct $\omega_n$ so that as $n$ tends to infinity, it converges to a limit $\omega$.
	
	Thus in sum, if $(G, \mu)$ is stabilisable, then it also is with the finite set $M'_n$ of stabilising matching types, meaning $\mu \in F_V$.
\end{proof}

\begin{remark}
	It is also clear from the proof that $|M'_n| = |E|$ and this cannot be improved.
\end{remark}

As a corollary of both lemmata, we obtain that $F_V = C_V$, and for all arrival rates $\mu$, $(G, \mu)$ is stabilisable using finitely many stabilising generalised matchings.

\begin{theorem}
	$(G, \mu)$ is stabilisable if and only if it is stabilisable using a set $M'$ of finitely many matching types. When this is the case, one may need exactly $|E|$ stabilising generalised matching types and this is optimal.
	
	In view of Theorem \ref{theorem: necessity and sufficiency of inccond}, this is equivalent to the existence of a matrix $N \in \N^{E \times E}$ such that the map $G_N = G\circ N$ is surjective and the equation $G_N \lambda = \mu$ admits a positive solution $\lambda \in \R_{> 0}^E$.
	
	With respect to $(G_{M'}, \mu)$, both $L_+$-MaxWeight and the family of Markovian, size-based, online assignment polices constructed in the proof of Lemma \ref{lemma: necessity of onlcond} admit maximum stability region.
\end{theorem}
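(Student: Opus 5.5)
The plan is to assemble the theorem from the two lemmata of this Section and Theorem \ref{theorem: necessity and sufficiency of inccond}, applied to a finite generalised hypergraph. The reverse implication is immediate. Every stabilising generalised matching type $m \in M'$ is an $\N$-combination of hyperedges, so realising $m$ amounts to realising its constituent matchings in some order. The standing assumption that $Z_v(0)$ is large for classes with $\mu_v = 0$ guarantees that no intermediate step is blocked. Hence any policy for $(G_{M'}, \mu)$ is a policy for $(G, \mu)$, and it has the same buffer trajectory at review times.

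For the forward implication, I take $\mu \in C_V$. The density lemma gives $\mu_n \in F_V$ with $\mu_n \to \mu$, and the closedness lemma gives $\mu \in F_V$, so $F_V = C_V$. To get the cardinality $|E|$, I reuse the Carath\'{e}odory step inside the closedness proof. Write $\omega = N\lambda \in \R^E_{\geq 0}$ for the vector of average hyperedge usage. Carath\'{e}odory reduces the support of the representation to at most $|E|$ columns. I then complete these to a basis of $\R^E$ using further elements of $M$, which is possible since $M$ spans $\R^E$ (it contains a basis, by surjectivity of $G_{M_n}$). Finally, the perturbation argument at the end of Lemma \ref{lemma: necessity of incccond} makes all coefficients strictly positive. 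The resulting $N \in \N^{E\times E}$ is invertible, so $G_N = G \circ N$ is surjective exactly when $G$ is. Surjectivity of $G$ itself follows from openness of the stability region, as in Lemma \ref{lemma: necessity of incccond}.

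With $G_N$ surjective and $G_N\lambda = \mu$ having a positive solution, Condition \eqref{eq:inccond} holds for the finite hypergraph $G_{M'}$. Its generalised hyperedges all have non-negative weights $m_v \geq 0$. Theorem \ref{theorem: necessity and sufficiency of inccond} therefore applies and gives the equivalence with the matrix formulation, together with the maximal stability of $L_+$-MaxWeight and of the online assignment family on $(G_{M'}, \mu)$.

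For optimality of $|E|$, I plan to argue that any $M'$ must make $G_{M'} = G\circ N_{M'}$ surjective. Openness of the stability region, applied to $(G_{M'},\mu)$, gives this surjectivity. The argument must then force $|M'| \geq \operatorname{rank} N_{M'} \geq |E|$. The intended route is to choose $(G,\mu)$ such that every direction of $\R^E$ is needed, for instance when $G$ is injective, in which case surjectivity of $G \circ N_{M'}$ forces $N_{M'}$ to have rank $|E|$. Alternatively, a one-edge example shows the bound is attained.

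The main obstacle is justifying the passage to the limit in the closedness step. I must check that the representations $\omega_n$ converge and that strict positivity and the basis property survive the limit. I would first try restricting to a fixed finite $M'$ that appears eventually along the non-decreasing sequence $M_n$. Since only finitely many $|E|$-subsets can appear once $M'_n$ stabilises, compactness of the positive coefficients, controlled by $\|\mu_n\|_1$, then yields a limiting representation.
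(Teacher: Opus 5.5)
You follow the paper's route exactly: density and closedness of $F_V$, Carath\'eodory plus completion to a basis, the perturbation from Lemma~\ref{lemma: necessity of incccond}, then Theorem~\ref{theorem: necessity and sufficiency of inccond} applied to $G_{M'}$. You also correctly locate the weak point, the passage to the limit in the closedness lemma, which the paper only asserts is ``well-defined at the limit''. Your repair does not close it, for two reasons. First, nothing forces $M'_n$ to become constant: the $M_n$ are nested, but their union may be infinite, so infinitely many $|E|$-subsets are available. Second, compactness of the coefficients gives at best a limiting \emph{real} usage direction. Nothing makes that direction an integer point of $\{x \in \R^E_{\geq 0} \mid Gx \geq 0\}$, i.e.\ a stabilising generalised matching type.

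In fact the step is false without further hypotheses. Take $V=\{u,v\}$, with $e_1$ consuming one unit of $u$, and $e_2$ consuming one unit of $v$ and producing $\sqrt{2}$ units of $u$. Let items be divisible and let one unit of $v$ arrive per step, so $\mu=(0,1)$. The policy ``fire $e_2$ while $Z_v\geq 1$, then $e_1$ while $Z_u\geq 1$'' keeps $Z\in[0,1)^2$, so $\mu\in C_V$. But every stabilising type $m=ae_1+be_2$ with $b\geq 1$ has $m_u=a-b\sqrt{2}>0$. Consequently:
\begin{itemize}
\item On any finite $M'$, the ratio $m_u/m_v$ is bounded below by a positive constant over the types that use $v$.
\item Since $u$ never arrives, any initial stock of $u$ runs out, and $v$ then accumulates.
\item Equivalently, no $N$ with stabilising columns gives a positive solution of $G_N\lambda=\mu$: the $u$-coordinate is a sum of non-negative terms that would all have to vanish.
\end{itemize}
Hence $\mu\notin F_V$. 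Yet $\mu_n=(1/n,1)\in F_V$ via $M'=\{ae_1+be_2,\,e_1\}$ with $a/b\in(\sqrt{2},\sqrt{2}+1/n)$, for which Condition~\eqref{eq:inccond} holds. Any admissible $M'_n$ must contain a type with $0<a/b-\sqrt{2}\leq 1/n$, so $M'_n$ never stabilises. That type's coefficient is at most $1/b\to 0$, while its normalised hyperedge usage $(a/b,1)$ tends to the irrational direction $(\sqrt{2},1)$. This is exactly the degeneration your compactness step assumes away.

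So the closedness lemma, and with it the first two paragraphs of the statement, fails for irrational weights once some $\mu_v=0$. Zero rates also break your appeal to openness for surjectivity of $G$. That argument needs $\mu_u>0$, and two mutually inverse hyperedges on zero-rate classes make $G$ rank-deficient without affecting stability. A correct version therefore needs an extra hypothesis. With rational weights, Gordan's lemma already makes $M$ finitely generated, and no limit is required. With irrational weights you would need at least $\mu_v>0$ for all $v$. You would also need an argument that approximates a usage vector $\omega$ with $G\omega=\mu>0$ directly by integer points, rather than going through closedness of $F_V$.
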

	\section{Continuous-time matchings}
\label{section: continous-time matchings}

In the previous Sections, we have developed a framework to analyse the stability of discrete-time matching models. The goal of this Section is to generalise what we have done to continuous time.

We may keep the matching hypergraphs and its derived notions as defined in Section \ref{section: preliminaries} and Section \ref{section: generalised matchings}, but the arrival process must be redefined. Indeed, a crucial difference with respect to the previous case is that now we no longer have clearly the ``before''-buffer state $X(t-1)$ and the ``after''-buffer state $X(t)$. As such, our first work is to define the arrival processes in this new setting.

\subsection{Continuous arrivals}

In the usual setting, we have that the arrival at time $t$ is i.i.d. sampled from a fixed distribution, or in other words, if we denote by $S(t) = (S_v(t))_{v \in V}$ the cumulative arrival process, then its increment $S(t+1) - S(t)$ is independent and stationary. Moreover, by definition, we have that $S(t)$ is a non-decreasing and non-negative process.

We use these two properties to generalise the arrival process to the continuous-time setting. More precisely, let $S(t) = (S_v(t))_{v \in V}$ be the cumulative arrival process, and in particular $S_v(t)$ is the quantity of class-$v$ items arrived up to time $t$, we require that $S$ be a multivariate subordinator.

We can write $S(t)$ in integral form. Let $\Pi$ be the unique L\'{e}vy measure associated with $S$ given by L\'{e}vy-Khintchine representation and $N(\cdot)$ be the corresponding Poisson random measure of intensity $\dd s \Pi(\dd k)$, we have
\[
	S(t) = f\cdot t + \int_{(0, t] \times \R_{> 0}^V} k N(\dd s, \dd k).
\]

Writing $S(t) = (S_v(t))_{v \in V}$, for a given $v \in V$, the second moment of $S_v(t)$ is given by
\[
	\E\left[S_v(t)^2\right]  = t \int_{\R_{\geq 0}^V \setminus \{0\}} k_v^2 \Pi(\dd k) + t^2 (\mu_v)^2,
\]
where $\mu_v = f_v + \int_{\R_{\geq 0}^V \setminus \{0\}} k_v \Pi(\dd k)$. As in the batch arrival case \cite{Nguyen2026}, we assume that $\E[S_v(t)^2]$ is finite for all $t$, which implies that $\int_{\R_{\geq 0}^V \setminus \{0\}} k_v^2 \Pi(\dd k)$ is finite. On the other hand, note that since $\Pi$ is a L\'{e}vy measure, we have
\[
	\int_{\|k\| \leq 1} k_v \Pi(\dd k) \leq \int_{\R_{\geq 0}^V \setminus \{0\}} (1 \wedge k_v^2) \Pi(\dd k) < \infty,
\]
so in particular, if $\int_{\R_{\geq 0}^V \setminus \{0\}} k_v^2 \Pi(\dd k)$ is finite, we have
\[
	\int_{\|k\| > 1} k_v \Pi(\dd k) \leq \int_{\|k\| > 1} k_v^2 \Pi(\dd k) < \infty,
\]
implying $\E\left[S_v(t)^2\right] < \infty$ for all $t$. As such, it would that we assume for all $v \in V$,
\[
	\int_{\R_{\geq 0}^V \setminus \{0\}} k_v^2 \Pi(\dd k) < \infty,
\]
but we will in fact assume that
\[
	\|\mu\|_2^2 = \int_{\R_{\geq 0}^V \setminus \{0\}} \|k\|^2 \Pi(\dd k) < \infty,
\]
which is the same claim when $V$ is finite.

Furthermore, let us write $\mu_v^c = f_v$ and $\mu_v^b = \int_{\R_{\geq 0}^V \setminus \{0\}} k_v \Pi(\dd k)$. Note that the mean of $S_v(t)$ is precisely
\[
	\E[S_v(t)] = t \left(f_v + \int_{\R_{\geq 0}^V \setminus \{0\}} k_v \Pi(\dd k)\right) = t(\mu_v^c + \mu_v^b) = t \mu_v,
\]
which allows us to interpret $\mu_v$ as the average arrival rate of class-$v$ items, and our assumption allows us to deduce $\mu_v < \infty$, which is a desirable property. In particular, $\mu_v^c$ and $\mu_v^b$ are the average number of class-$v$ items arriving as a continuous flow and as batches, respectively. Denote
\[
	\|\mu\|_1 = \sum_{v \in V} \mu_v = \int_{\R_{> 0}^V} \|k\|_1 \Pi(\dd k).
\]

To see the link between this arrival process and the discrete-event case, we write the term $S^b(t)$ as a countable sum of compound Poisson processes. For $i \geq 1$, let
\[
E_i = \left\{k \in \R_{\geq 0}^V : \frac{1}{n+1} < \|k\| \leq \frac{1}{n}\right\},
\]
and $E_0 = \{k \in \R_{\geq 0}^V : \|k \| > 1\}$, then we have $\R_{\geq 0}^d \setminus \{0\} = \bigcup_{n = 0}^\infty E_n$. As $\Pi$ is a L\'{e}vy measure, we have that 
\[
\int_{\R_{\geq 0}^V \setminus \{0\}} (1 \wedge \|k\|) \Pi(\dd k) < \infty,
\]
and in particular, we have $n_i = \Pi(E_n) < \infty$. Then, using L\'{e}vy-It\^{o} decomposition, we can write
\[
S(t) = f \cdot t + \sum_{i = 0}^\infty \sum_{j = 0}^{N_i(t)} \xi_{i, j}
\]
for some $f \in \R_{\geq 0}^V$, $N_i(t) \sim \text{Poisson} (n_i t)$ and $\xi_{i, j}$ are drawn i.i.d. from the distribution $\P(\xi_{i, j} \in \cdot) = \frac{1}{n_i} \Pi(\cdot \cap E_i)$, which is the (normalised) distribution $\Pi$ restricted to the set $E_i$.

This sum has two terms: the linear term $S^c(t) = f \cdot t$, which corresponds to the \textit{continuous} inflow of raw material, and the pure-jump term, $S^b(t) = \sum_{i = 0}^\infty \sum_{j = 0}^{N_i(t)} \xi_{i, j}$, which corresponds to the \textit{batch} arrivals. In particular, let $T_{i, n}$ be the $n$\textsuperscript{th} arrival time of the Poisson process $N_i$, then at time $T_{i, n}$, there is a batch of size $\xi_{i, j} \in \R_{\geq 0}^V$ arriving.

From here, we recognise the discrete-time model as a special case, where $f = 0$ and the pure-jump term is simply a Poisson process instead of a countable sum of compound Poisson processes. As such, this definition of arrival process generalises the discrete-event case.

\subsection{Matching models}

Together with the matching hypergraph $G$, the pair $(G, S)$ is called a matching model. We equip each vertex $v \in V$ with a queue $Z_v(t)$ taking value in $K_v$; the whole queue-length process $Z(t)$ takes values in $K_V = \prod_{v \in V} K_v$.

The main reason for requiring $Z(t) \in K_V$ instead of $\R_{\geq 0}^V$ comes from the discrete setting where $f = 0$. For a given class $v \in $, if $f_v > 0$ then it means that items of this class will arrive as a continuous flow, which suggests that they are infinitely divisible. However, if $f_v = 0$, then the items only arrive in batches, which may or may not be divisible.

\begin{remark}
	Readers might notice that unlike the discrete-event case, we have ignored the time of arrival (see. Section \ref{section: preliminaries}). Indeed, it is possible to specify the arrival time: recall that in the discrete setting, we have defined the buffer $\hat{Z}_v(t)$ as a sequence $(\hat{z}^t_{v, n})_{n \in \N} \in K_V^\N$ where $\hat{z}^t_{v, n}$ is the ``number'' of items arrived at time $n$ and still in the buffer at time $t$. Replacing the index $n \in \N$ with $n \in \R$ and writing
	\[
		\hat{Z}_{v, n}(t) = f_v + \sum_{i = 0}^\infty \sum_{j = 0}^\infty (\xi_{i, j})_v \mathbbm{1}_{t = T_{i, j}}
	\]
	where $T_{i, j}$ is the $j$\textsuperscript{th} arrival time of the Poisson process $N_i$, we still have that
	\[
		\int_\R \hat{Z}_{v, n}(t) \dd n = Z_v(t),
	\]
	similar to the identity $\sum_{n = 0}^\infty \hat{Z}_{v, n}(t) = Z_v(t)$ in the discrete-event case.
	
	The reason we do not do this is two-fold. On the one hand, this considerably complicates our (already rather heavy) representation, which later leads to further complication in analysis. For instance, for a given matching, a matching policy must decide how many items it takes from $\hat{Z}_{v, n}(t)$, which potentially incurs measurability issue.
	
	On the other hand, as we will see, the notion of stability is defined using $Z$ instead of $\hat{Z}$, and much analysis is based on $Z$ instead of $\hat{Z}$. In particular, as we will see and may expected from the discrete setting, whenever the matching model $(G, S)$ is stabilisable, there is a stabilising matching policy using only $Z$, so this additional layer of rigour brings no benefits.
\end{remark}

\subsection{Matching policies}

Informally, a realisation of the model can be understood as follow: we start with some initial buffer $Z(0) \in K_V$. Then, at time $t$, some items arrive potentially as batches and as a tiny amount of flow, which together is denote by $\dd S(t)$, to the ``before'' buffer $Z(t^-)$. Then, some $\dd Q(t)$ matching occurs, and the resulting buffer is $Z(t)$.

With this intuition in mind, we define a cumulative matching process $Q(t) = (Q_v(t))_{v \in V}$ with $Q(0) = 0$ as a non-decreasing process taking value in $K_V$ and satisfying $Q(t) \leq S(t)$ for all $t$, meaning that it does not use more items than what has arrived at the time. Moreover, as a matching policy may only use at time $t$ the information available to it at the moment with possibly some independent source of randomness. In particular, let $U = (U(t))_{t \geq 0}$ be a sequence of independent random variables, and $\mathcal{H}_t = \sigma\left(\left\{Z(0), S^c(t'), S^b(t), U(t') \mid t' \leq t\right\}\right)$ be the Borel $\sigma$-algebra generated by the arrivals $(S(t'))_{t' \leq t}$ up to time $t$ and the randomness available thus far $(U(t'))_{t' \leq t}$, then we require that $Q$ be adapted to $\mathcal{H}$.

The resulting buffer at time $t$ can be written as
\[
	Z(t) = Z(0) + S(t) - Q(t),
\]
and we identify our usual notion of matching policy $\Phi$ with the matching process $Q(t)$. This is the description of a matching policy in generality, but as in the discrete setting, we may have relaxations such as Markovian, stationary, and size-based polices.

\begin{definition}
	\hfill
	\begin{itemize}
		\item A matching policy $\Phi$ is said to be size-based if instead of containing $S^c(t)$ and $S^b(t)$, we require $\mathbb{H}_t$ to contain only $S(t) = S^c(t) + S^b(t) \in \mathcal{H}_t$. In particular, it ignores the individual batches, which is specified by the batch arrival process $S^b(t)$, and only takes into account the total size $S(t)$.
		\item A matching policy $\Phi$ is said to be Markovian if for all $0 \leq s \leq t$, its behaviour in the future $Q(t)$ depends only on the current buffer state $X(s)$ and the arrivals in-between $(S(t'))_{s \leq t' \leq t}$, with possibly some randomisation. Specifically, for all $B \in \mathcal{B}(K_V)$, we have
		\[
		\P\left[Q(t) \in B \mid \mathcal{H}_s, (S(t'))_{s < t' \leq t}\right] = \P\left[Q(t) \in B \mid s, X(s), (S(t'))_{s \leq t' \leq t}\right].
		\]
		If the kernel does not depend on $s$, then we say that the matching policy is stationary.
	\end{itemize} 
\end{definition}

Note that even when $\Phi$ is a Markovian policy, the cumulative matching process $Q(t)$ is not Markovian in general, for $Q(t)$ depends on the arrivals $(S(t'))_{s \leq t' \leq t}$, in addition to $Q(s)$. However, the pair $(Q, S) = (Q(t), S(t))_{t \in \R_{\geq 0}}$ is a Markov process in this case.

Regardless of the policy's properties, the goal is to ensure stability. Following the discrete-event case, we say that $\Phi$ stabilises $(G, S)$ if at any given point, the resulting queue-length process $Z$ is expected to be ``small'' again in finite time. Formally speaking, we require that there exists a compact set $C \subset K_V$ such that for all $T \geq 0$, we have that
\[
	\E\left[\min\{t \geq 0 \mid Z(T + t) \in C\} \mid \mathcal{H}_T\right] < \infty.
\]

\subsection{Periodic reviewing and stability}

It turns out that as complicated as it seems, there is a natural way to reduce the case of continuous time to that of discrete-time, via time aggregation.

To see how, let us come back to the discrete-time models $(G, \mu)$. As we have seen, our stability criteria, Condition \eqref{eq:onlcond}, \eqref{eq:gencond}, and \eqref{eq:inccond}, only relate to $\mu$ via the arrival rates $(\mu_v)_{v \in V}$. If we partition the arrivals into the block of $n$ time steps, meaning we treat the arrivals between $nk$ and $n(k+1)-1$ together as one big batch, effectively we are speeding time up by a factor of $n$, so $\mu$ is scaled up by $n$.

But it also suffices to scale up the original solution $\lambda$ to the conservation equation $G_{M'} \lambda = \mu$ by $n$, and the surjectivity of $G_{M'}$, which depends on $\lambda$ only up to a scaling, remains the same. Per Condition \eqref{eq:inccond}, this new model $(G, n\mu)$ is stabilisable if and only if $(G, \mu)$ is also stabilisable. For non-negative weights, another way to see this is that it suffices to multiple the assignment rates in Condition \eqref{eq:onlcond} (and eventually the reassignment rate \eqref{eq:gencond}) by $n$, so the stability is preserved.

The key idea now is to do the same for continuous time. Namely, let us choose a period $T$. Given an arrival process $S$, we define two auxiliary discrete-time arrival processes $(S_-(t))_{t \in \N}$ and $(S_+(t))_{t \in \N}$. For $S_-$, we define $S_-(k) = S(kT)$. For $S_+$, we define $S_+(k) = S((k+1)T)$. As $S$ is non-decreasing, we have that $S_-\left(\left\lfloor \frac{t}{T}\right\rfloor\right) \leq S(t) \leq S_+\left(\left\lfloor \frac{t}{T}\right\rfloor\right)$, meaning $S_-$ and $S_+$ are an lower bound and an upper bound for $S$, up to an appropriate time scaling.

Informally, what we are doing is to start both $S_-$ and $S_+$ with the same initial buffer $S(0)$. Then, for each period $((k-1) T, kT]$, we consider the arrivals during this period as one batch $\Delta(k) = S(kT) - S\left(\left((k-t)T\right)^-\right)$. For $S_-$, instead of spreading it out, we give the whole batch $\Delta(k)$ at the beginning of the period, and for $S_+$, we give it at the end of the period.

Clearly, this means that if $(G, S_-)$ is stabilisable, then so is $(G, S)$, and thus so is $(G, S_+)$. However, both $(G, S_-)$ and $(G, S_+)$ can be seen as an instance of the same matching model $(G, T\mu)$ (though with different initial buffers). Indeed, as $S$ is a subordinator, the increments $\Delta(k)$ are stationary, and in particular, we have the arrival rate to be exactly $T\mu$. Per the discussion at the beginning of this Subsection, we may ignore the period length $T$, and the stability of $(G, S)$ is precisely equivalent to that of $(G, \mu)$.

\begin{theorem}
	The continuous-time matching model $(G, S)$ is stabilisable if and only if so is the discrete-time $(G, \mu)$. When this is the case, using periodic review of any period, both $L_+$-MaxWeight and the family of Markovian, size-based, online assignment polices constructed in the proof of Lemma \ref{lemma: necessity of onlcond} admit maximum stability region.
\end{theorem}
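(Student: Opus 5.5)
We prove the two implications separately. Both rest on the fact, from Theorem \ref{theorem: necessity and sufficiency of inccond} and its general-weight extension in Section \ref{section: general weights}, that stabilisability of a discrete-time model depends only on the rate vector $(\mu_v)_{v\in V}$, and is invariant under scaling $\mu\mapsto T\mu$ for $T>0$. To see the invariance, scale the positive solution $\lambda$ of $G_{M'}\lambda=\mu$ by $T$; surjectivity of $G_{M'}$ does not involve $\mu$ at all. Fix a period $T>0$ and let $\Delta(k)=S(kT)-S((k-1)T)$. Since $S$ is a subordinator, the $\Delta(k)$ are i.i.d. with mean $T\mu$. By the computation of $\E[S_v(t)^2]$ and the standing assumption $\int\|k\|^2\Pi(\dd k)<\infty$, they also have finite second moment. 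Hence the batch sequence $(\Delta(k))_k$ defines a bona fide discrete-time model with rate $T\mu$, to which all earlier results apply.

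\textbf{Sufficiency.} Assume $(G,\mu)$ is stabilisable. We build a periodic-review policy for $(G,S)$. At each review time $kT$, it applies to $Z(kT)$ exactly the matchings that the discrete-time $L_+$-MaxWeight policy (or the online assignment policy of Lemma \ref{lemma: sufficiency of onlcond}, via the generalised types $M'$) would make on receiving batch $\Delta(k)$; it does nothing between review times. This policy is adapted to $\mathcal{H}_t$, since it only uses $S(kT)$ at time $kT$. It is size-based and Markovian.

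The sampled chain $Z(kT)$ coincides in law with the discrete-time model driven by $(\Delta(k))$. By Lemma \ref{lemma: sufficiency of onlcond}, that chain has drift $\E[\Delta L_+]\le C-\varepsilon\|x\|_1$, so its return time $\kappa$ to a compact $C_\delta$ has finite mean. It remains to pass from sampled times to continuous time. We stop at the continuous time $\kappa T$, at which $Z(\kappa T)\in C_\delta$. Hence $\E[\min\{t:Z(T_0+t)\in C_\delta\}\mid\mathcal{H}_{T_0}]\le T(1+\E[\kappa\mid\cdot])<\infty$, where the extra $T$ accounts for waiting until the next review time after an arbitrary start $T_0$. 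Between reviews, $Z$ only grows by arrivals, but this does not matter for hitting $C_\delta$ at a review time.

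\textbf{Necessity.} Assume some policy $Q$ stabilises $(G,S)$. We show that the rate $T\mu$ satisfies Condition \eqref{eq:gencond}, equivalently \eqref{eq:inccond}. Rather than produce a discrete policy, we rerun the proof of Lemma \ref{lemma: necessity of gencond} directly in continuous time. Define $Y$ from $Z$ through the cumulative matchings decomposed into generalised types. Suppose the condition fails. Then the separating-hyperplane argument yields a weight vector $w\ge0$ such that $L_-(Y)$ is a submartingale along the sampled times $kT$. The point is that over each period the policy's aggregate matchings play the role of an assignment/reassignment pair. Optional stopping at the hitting time of $C$ then bounds $L_-(Y(0))$ uniformly, contradicting the freedom to take the initial buffer arbitrarily large.

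The main obstacle is the necessity step. A continuous-time policy matches at arbitrary times, with arbitrary sub-period granularity, so its aggregated per-period matching vector must be shown to be a legitimate (re)assignment in the sense of Section \ref{section: reassigment and general policies}. The hitting time of $C$ may also fall between sampling times. To handle the first issue, we bound $\|Y\|$ linearly by $\|Z\|_1$, as in Lemma \ref{lemma: necessity of onlcond}. To handle the second, we control the overshoot over one period by $\E\|\Delta(k)\|_1=T\|\mu\|_1$, so that $L_-(Y)$ at the next sampled time stays bounded by a constant independent of $Y(0)$. The additional statement that every period works follows because $T$ was arbitrary.
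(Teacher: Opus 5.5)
Your sufficiency half is correct and is essentially the paper's argument. Reviewing at the grid $kT$ is the paper's device of handing each period's arrivals to the discrete-time policy as a single batch, which turns the model into an instance of $(G,T\mu)$. You make it more careful than the paper's sketch: finite second moments of $\Delta(k)$, the drift bound of Lemma~\ref{lemma: sufficiency of onlcond}, and the extra period when starting off the grid.

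The necessity half departs from the paper and has a genuine gap. The paper never reopens the Lyapunov argument. It compares $(G,S)$ with the period-aggregated discrete models $S_\pm$ at the level of models, so the discrete characterisation, including the general-weight reduction of Section~\ref{section: general weights}, is used as a black box. You instead rerun the $L_-$/separating-hyperplane proof in continuous time, and that proof needs a fixed finite hypergraph with positive weights. For general $G$ this fails in three ways:
\begin{itemize}
\item The step ``decompose the cumulative matchings into generalised types'' is not well defined. A continuous-time policy performs simple matchings of $G$, some of which create items. Neither these nor their per-period aggregates need be stabilising: a period can end with a net surplus of an intermediate class that is only consumed later. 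So there is no labelled process $X$, hence no $Y$, indexed by a fixed $M'$.
\item $M$ is infinite, and the general-weight criterion is an existence statement over finite $M'\subset M$. Its negation gives no single finite hypergraph on which to separate.
\item Once matchings can create items, $Z$ can grow between a visit to $C$ and the next sampling time by more than the arrivals. Your overshoot control by $\E\|\Delta(k)\|_1=T\|\mu\|_1$ therefore also breaks down.
\end{itemize}

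Even for $G\in\R_{\ge 0}^{V\times E}$, the remedy you give for your first obstacle does not address it. Bounding $\|Y\|$ by $\|Z\|_1$ is only used at the stopping time; it says nothing about whether the aggregated matchings form an admissible (re)assignment. What settles it is linearity:
\begin{itemize}
\item Label each item at arrival, and record any relabelling needed at a matching as a reassignment.
\item Matchings leave $Y$ invariant.
\item The conditional expected increment of $L_-(Y)$ over any interval of length $h$ is $\langle w,\Delta(\nu+\Gamma)\rangle$ for the aggregated pair. Here $\Delta$ is the linear map from the proof of Lemma~\ref{lemma: necessity of onlcond}, and the assignment part has row sums $h\mu_v$.
\end{itemize}
Failure of the condition is invariant under scaling $\mu$, so $L_-(Y(t))$ is a continuous-time submartingale. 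You can then stop directly at the hitting time of $C$, with no sampling and no overshoot. For non-negative weights, your route thus gives a self-contained argument, whereas the paper's comparison gets general weights for free.

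To cover general weights, argue at the level of models as the paper does. One way is to emulate a stabilising continuous-time policy in the discrete model that receives each period's batch at the start of the period:
\begin{itemize}
\item Draw the within-period path of $S$ from its conditional law given the batch, using independent randomness.
\item Replay the policy's matchings. These are feasible because all items of the period are already present.
\item Defer to the next step the matchings made after a visit to $C$, so that the overshoot at that step consists of arrivals only.
\end{itemize}
Then apply the discrete-time theorem to $(G,T\mu)$ and rescale.
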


\begin{remark}
	Note that the fact that the moments of reviewing need not be regular. One may replace $0, T, 2T, \ldots$ with any sequence $0 = t_0 < t_1 < t_2 < \ldots$ where $(t_{k+1} - t_k)_{k \in \N}$ is uniformly bounded.
\end{remark}

	\section{Conclusion}
\label{section: conclusion}

We have defined stochastic matching models with weights and batch arrivals, where the arrival process satisfies some mild technical conditions. We also extended the framework of online assignment and reassignment from the unweighted case to this new setting.

Some differences, such as the fact that $K_V$ needs not be finite and the weights may be negative, prevents the proofs of the unweighted case from carrying over, but we have correctly formulated and proven the necessity and the sufficiency of some (equivalent) criteria for stability, namely Condition \eqref{eq:onlcond}, \eqref{eq:gencond}, and \eqref{eq:inccond}, which generalise those for the unweighted case.

Using the notion of stabilising generalised matching types, we have reduced the general-weighted case to the non-negative weighted case, and show that one may consider exactly $|E|$ stabilising generalised matching types to stabilise the model whenever it is stabilisable, and this number is optimal. As these results depend only on $G$ and the arrival rates $(\mu_u)_{v \in V}$, the same criteria hold for continuous-time matching models.

Amongst the novelties, we constructed a new Lyapunov function $L_+$ and show that $L_+$-MaxWeight policy is maximally stable. On the other hands, the proofs also give a (family of) maximally stable online assignment policies that are efficiently implementable.
	
	\bibliographystyle{ieeetr}
	\bibliography{refs}
\end{document}